\documentclass{amsart}
\usepackage{amsfonts,amscd}

\newtheorem{theorem}{Theorem}[section]
\newtheorem{lemma}[theorem]{Lemma}
\newtheorem{corollary}[theorem]{Corollary}
\newtheorem{proposition}[theorem]{Proposition}
\theoremstyle{remark}

\theoremstyle{definition}
\newtheorem{definition}[theorem]{Definition}

\numberwithin{equation}{section}
\makeatother

\newcommand{\fc}{\frac{1}{2}\mathfrak{F}_{\mathbb{C}}}

\newcommand{\half}{\frac{1}{2}}
\newcommand{\norm}[1]{\left \|#1\right\|}

\DeclareMathOperator{\newc}{\mathfrak{c}}
\DeclareMathOperator{\re}{\mathfrak{r}}

\DeclareMathOperator{\Cdb}{{\mathbb C}}
\DeclareMathOperator{\Rdb}{{\mathbb R}}
\DeclareMathOperator{\Ddb}{{\mathbb D}}

\DeclareMathOperator{\Ndb}{{\mathbb N}}

\begin{document}

\title[Positivity and Roots in Operator Algebras]{On Positivity and Roots in Operator Algebras}

\author{Clifford A. Bearden}
\address{Department of Mathematics \\
 University of Houston \\
 Houston, TX 
77204-3008}
\email{cabearde@math.uh.edu}

\author{David P. Blecher}
\address{Department of Mathematics s \\
 University of Houston \\
 Houston, TX
77204-3008}
\email{dblecher@math.uh.edu}

\author{Sonia Sharma}
\address{Department of Mathematics \\
SUNY Cortland \\
 Cortland, NY 13045}
 \email{sonia.sharma@cortland.edu}

\thanks{The first two authors were supported by a grant from the NSF.
Revision of March 2014}

\subjclass{Primary 46L07,  47L30, 47B44;
 Secondary   47A60, 47A63, 47B65}

\begin{abstract}   In earlier papers the second author and Charles Read have introduced and studied a new 
notion of positivity for operator algebras, with an eye to
extending certain $C^*$-algebraic results and theories to more 
general algebras.   The present paper consists of complements to some facts in the just mentioned papers,
concerning this notion of positivity.     For example we prove a result on the numerical range of products of the roots of commuting operators with numerical range in a sector.   
  \end{abstract}

\maketitle

\section{Introduction}

An {\em operator algebra} is a closed subalgebra $A$ of $B(H)$, for a
Hilbert space $H$.  We will be working over the complex field
always, so $H$ is a complex Hilbert space. 
In operator theory and in the theory of selfadjoint  operator algebras ($C^*$-algebras and von Neumann algebras), it is hard to overestimate
the importance of the role played by 
positive elements and their roots.      
In earlier papers \cite{BR1, BR2, BRIII,Read} the second author and Charles Read have shown that many of these crucial positivity ideas
carry over to 
more general operator algebras (see also e.g.\ \cite{Bnew,BN2}).  
These authors introduce and study a new 
notion of positivity for operator algebras, even in
 algebras with no nonzero 
positive elements in the usual sense.  This is done with an eye to
extending certain $C^*$-algebraic results and theories to more 
general algebras.   A central role is played by the set ${\mathfrak F}_A = \{ a \in A : 
\Vert 1 - a \Vert \leq 1 \}$,  and the cones 
$${\mathfrak c}_A = 
\Rdb_+ \, {\mathfrak F}_A, \; \; \text{and} \; \; {\mathfrak r}_A = \overline{{\mathfrak c}_A} = \{ a \in A : a + a^* \geq 0 \}.$$
Elements of these sets and their roots play the role in many situations of positive elements
in a $C^*$-algebra.  
The present paper consists of complements to some facts in the just mentioned papers,
concerning this `positivity'.      
For example, in Section 2, we characterize `real completely positive maps' relative to
our cone ${\mathfrak r}_A$.  In Section 3 we clarify a point about the support projection 
$s(x)$ from \cite{BR1}.  In Section 4 we prove some interesting and surprising facts about `roots' of 
operators in our `positive cone'.   These results are used in \cite{BRIII}, and should be useful elsewhere.   For example we prove that the product of suitable roots of commuting 
operators in ${\mathfrak r}_A$ (resp.\ in ${\mathfrak F}_A$) is again 
in ${\mathfrak r}_A$ (resp.\ in ${\mathfrak F}_A$).     There is an  extensive literature on accretive products of matrices and operators
(see e.g.\ \cite{HJ,GR} for some discussion of the difficulties and 
basic results here); such product formulae are rare and have extremely important applications.  
Our result in the accretive case will not be surprising to experts on sectorial operators, however it does not appear to be in the literature.  
 It is related to theorems of R. Bouldin, J. Holbrook, T. Kato,  J. P. Williams, and others (see e.g. \cite{GR} for references).

We now state our notation, and some facts. We refer the reader to
\cite{BLM,BR1,BR2} for additional background on operator algebras, and for
some of the details and notation below.  
We reserve the letter $H$ for a complex Hilbert space,
usually the Hilbert space
on which our operator algebra is acting, or is completely isometrically represented.  
We write $X_+$ for the positive operators (in the usual sense) that happen to
belong to a subset $X$ of $B(H)$ or of a $C^*$-algebra. 
We write oa$(x)$
for the operator algebra generated by $x$ in $A$, namely the smallest closed
subalgebra of $A$ containing $x$.      

The second dual $A^{**}$ is also an operator algebra with its (unique)
Arens product.  This is also the product inherited from the von Neumann
algebra $B^{**}$ if
$A$ is a subalgebra of a $C^*$-algebra $B$.  (We use the same symbol $*$ for the Banach dual
and for the involution or adjoint operator, the reader will have to determine which
is meant from the context.)   
Note that
$A$ has a contractive approximate
identity (cai) iff $A^{**}$ has an identity $1_{A^{**}}$ of norm $1$.
In this case we say that
$A$ is {\em approximately unital}.

We recall that by a theorem due to  Ralf Meyer,
every operator algebra $A$ has a  unitization $A^1$
which is unique up
to completely isometric homomorphism (see
 \cite[Section 2.1]{BLM}). Below $1$ always refers to
the identity of $A^1$ if $A$ has no identity. If $A$ is a nonunital
operator algebra represented (completely) isometrically on a Hilbert
space $H$ then one may identify $A^1$ with $A + \Cdb I_H$.  
For an operator algebra, not necessarily approximately  unital,
we recall that ${\mathfrak r}_A$ is the cone of  elements with 
positive `real part'.  As we just said, $A^1$ is uniquely defined, and can be viewed
as $A + \Cdb I_H$.  Hence 
$A^1 + (A^1)^*$ is also uniquely defined, by e.g.\ 1.3.7 in \cite{BLM}.
We define $A + A^*$ to be the obvious subspace
of $A^1 + (A^1)^*$.  This is well defined independently
of the particular Hilbert space $H$ on which $A$
is represented, as shown at the start of Section 3 in \cite{BR2}.
 Thus a statement such as
$a + b^* \geq 0$ makes sense whenever $a, b \in A$, and is independent of the particular $H$ on which $A$
is represented, and in particular the same is
true for ${\mathfrak r}_A = \{ a \in A : a + a^* \geq 0 \}$.   Elements in ${\mathfrak r}_A$, that is elements in $A$ with
${\rm Re}(x) = x + x^* \geq 0$,  will sometimes be called
{\em accretive} (although this term is often used in the
literature for
a slightly different notion).

We recall that $\frac{1}{2} {\mathfrak F}_A = \{ a \in A : \Vert 1 - 2 a \Vert \leq 1 \}$.   Here $1$ is 
the identity of the unitization $A^1$ if $A$ is nonunital.  
It is easy to see
that $x \in {\mathfrak c}_A = \Rdb_+ {\mathfrak F}_A$ iff there is a positive
constant $C$ with $x^* x \leq C(x+x^*)$.
These sets (and ${\mathfrak r}_A$) are studied in earlier work as analogues of the positive cone
of a $C^*$-algebra (particularly in \cite{BR1, BR2, BRIII, BN1, BN2}). 
We showed in \cite[Section 3]{BR2} that ${\mathfrak r}_A = \overline{{\mathfrak c}_A}$,
where ${\mathfrak c}_A = \Rdb_+ \, {\mathfrak F}_A$.  It is clear that $\overline{{\mathfrak c}_{A^1} \cap A} = {\mathfrak r}_{A^1} 
\cap A = {\mathfrak r}_A$.

By the {\em numerical range} $W(x)$ of an element $x$, we will mean the set of values $\varphi(x)$ for states $\varphi,$
while the literature we quote usually uses the one defined by vector states on $B(H)$.  However since the former range is the 
closure of the latter, as is well known, this will cause no difficulties.  
For any operator $T \in B(H)$ whose numerical range does not include strictly negative
numbers, and for any $\alpha \in [0,1]$, there is a well-defined `principal' root $T^\alpha$,
which obeys the usual law $T^\alpha T^\beta = T^{\alpha + \beta}$ if $\alpha + \beta \leq 1$
(see e.g.\ \cite{MP,LRS}).   Write $S_\psi$ for  the
 sector $ \{ r e^{i \theta} : 0 \leq r , \, \text{and} \,  
-\psi \leq \theta \leq \psi \}$ where $0 \leq \psi < \pi$.
Then $T \mapsto T^\alpha$ is continuous on operators with numerical range
in $S_\psi$.   Our operators $T$ will be accretive (that is,
 $\psi \leq \frac{\pi}{2}$), 
and then these                 
powers obey the usual laws such as  $T^\alpha T^\beta = T^{\alpha + \beta}$
for all $\alpha, \beta > 0$,
$(T^\alpha)^\beta
= T^{\alpha \beta}$ for $\alpha \in (0,1]$ and any $\beta > 0$, 
and $(T^*)^\alpha  = (T^\alpha)^*$.     
If  $n \in \Ndb$ then $T^{\frac{1}{n}}$ is the unique $n$th root of $T$ with numerical 
range in $S_{\frac{\pi}{2n}}$, for any $\alpha \geq 0$.   See e.g.\ \cite[Chapter IV, Section 5]{NF} and \cite{Haase}
for all of these facts.   We also have 
 $(cT)^\alpha  = c^\alpha T^\alpha$ for positive scalars $c$.     Also $\alpha \mapsto T^{\alpha}$ 
is continuous on $(0,\infty)$, for $T$ accretive.  For $x \in {\mathfrak r}_A$ we have $x^\alpha \in {\rm oa}(x)$ if  
$\alpha > 0$.  See \cite[Lemma 1.1]{BRIII} for these facts.  
In particular, ${\mathfrak r}_A$ is closed under taking roots.

\section{Positivity and Real Complete Positivity}

In \cite[Section 8]{BR1}, the second author and Read  defined a class of linear 
maps called OCP  or {\em operator completely positive} (the precise definition is given below), 
and proved  an extension and  Stinespring dilation  theorem for them.   
In particular it was shown  that the linear
$B(H)$-valued OCP maps on a unital operator space or approximately unital operator algebra
$A$ are the restrictions to $A$ of linear completely positive maps in the usual sense on an enveloping $C^*$-algebra.   
Here we do the same for maps respecting the `cone' of elements with positive real part.  
If $A$ is a unital operator space in the sense of e.g.\  1.3.1 in \cite{BLM}, and \cite{BN0} (and its sequel by the same authors),
notice that  ${\mathfrak r}_A = \{ x \in A : x + x^* \geq 0 \}$ also makes sense, and is closed.  This is because 
the operator system $A + A^*$ is well defined independently of the representation of  $A$ as a unital operator space on a Hilbert space,
by e.g.\ 1.3.7 in \cite{BLM}.  Clearly $\re_A$ spans $A$ in this case, since $1 \in \re_A$,  and for any $x \in A$ we have 
$x + t 1 \in
\re_A$ for large enough $t$.  If $A \subset B$ is a unital containment of unital operator spaces, then $\re_A \subset \re_B$.  Finally,
$\re_A = \overline{\Rdb_+ {\mathfrak F}_A}$.   To see this notice that $\Rdb_+ {\mathfrak F}_A \subset \re_A$ as in the operator algebra 
case.  For the other direction, if $x \in \re_A$ then $${\rm Re}(x + \frac{1}{n})
\; \geq \, \frac{1}{n} \, 1 \, \geq \; 
C \, (x + \frac{1}{n})^*  \, (x + \frac{1}{n})$$
for some positive
 constant $C$.
Hence $x + \frac{1}{n} \in \Rdb_+ {\mathfrak F}_A$, and so $x \in \overline{\Rdb_+ {\mathfrak F}_A}$.

\begin{definition} \label{rcdef} A linear completely bounded map $u: A\to B$ between operator algebras (or between unital operator spaces) is {\it real completely positive} (RCP) if $u(x)+u(x)^*\geq 0$ whenever $x\in A$ with $x+x^*\geq 0$, and similarly for $x\in M_n(A)$ for all $n\in \Ndb$. In other words, $u_n(\re_{M_n(A)})\subseteq \re_{M_n(B)}$ for all $n\in \Ndb$.
\end{definition}  

It is clear from properties of $\re_A$ mentioned earlier, that restrictions of RCP maps to subalgebras, or to unital operator subspaces, are again RCP.

The OCP maps were defined in \cite{BR1} similarly to Definition \ref{rcdef}, but requiring the existence of a positive constant $C$ with 
$u_n({\mathfrak F}_{M_n(A)}) \subset C \,  {\mathfrak F}_{M_n(B)}$ for
every $n \in \Ndb$.   If  $A, B$ are operator algebras or unital operator spaces, and if $T: A\to B$ is OCP then $T$ is RCP.  This follows
easily from the definitions, and  the fact that $\re_A=\overline{\newc_A}$.
Indeed  $T(\overline{\newc_A})\subseteq \overline{T(\newc_A)}\subseteq \overline{\newc_B}$ if $T$ is OCP, and a  similar argument applies at each matrix level.  The following  is also clear:

\begin{lemma}    Restrictions of a linear completely positive map from a $C^*$-algebra into $B(H)$, to a subalgebra
or unital subspace, are RCP.
\end{lemma}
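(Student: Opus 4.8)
The plan is to deduce this from the definition of $\re$ together with two standard facts about a completely positive map $\Phi$ on a $C^*$-algebra: that $\Phi$ is automatically completely bounded, and that each matrix amplification $\Phi_n$ is a positive, hence $*$-preserving, linear map. (Alternatively one could quote from \cite{BR1} that such restrictions are OCP and combine this with the implication OCP $\Rightarrow$ RCP noted above, but the route below is self-contained.) So: given a completely positive $\Phi : B \to B(H)$ with $B$ a $C^*$-algebra, a subalgebra or unital operator subspace $A \subseteq B$, and $u = \Phi|_A$, the first step is to note that complete boundedness of $\Phi$ gives complete boundedness of $u$, so $u$ is eligible for Definition \ref{rcdef}.

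The second step is a bookkeeping observation: membership in $\re_{M_n(A)}$ is detected inside $M_n(B)$. For $x \in M_n(A)$ the element $x + x^*$ lies in $M_n(B)$, since the scalar parts coming from the unitization $A^1$ cancel; and the condition $x + x^* \geq 0$ used to define $\re_{M_n(A)}$ (well defined by the remarks in the Introduction and before Definition \ref{rcdef}) is exactly the statement that this self-adjoint element of $M_n(B)$ is positive, because positivity of a self-adjoint element of a $C^*$-algebra is intrinsic to that algebra. No subtlety about unitizations or representations therefore intervenes.

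Finally, for $n \in \Ndb$ and $x \in \re_{M_n(A)}$ I would apply the positive, $*$-preserving map $\Phi_n$ to $x + x^* \geq 0$ to get
$$ u_n(x) + u_n(x)^* \; = \; \Phi_n(x) + \Phi_n(x^*) \; = \; \Phi_n(x + x^*) \; \geq \; 0 , $$
whence $u_n(\re_{M_n(A)}) \subseteq \re_{M_n(B(H))}$; since $n$ is arbitrary, $u$ is RCP. I do not expect any genuine obstacle: the only things needing care are citing the two standard facts correctly and observing that $x + x^* \geq 0$ means the same thing whether $x$ is read in $M_n(A)$ or in $M_n(B)$.
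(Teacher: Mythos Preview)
Your argument is correct. The paper gives no proof of this lemma at all: it simply prefaces the statement with ``The following is also clear.'' Your write-up is precisely the natural unpacking of why it is clear --- complete positivity gives complete boundedness, each $\Phi_n$ is positive and $*$-preserving, and $x+x^*\geq 0$ is intrinsic to the ambient $C^*$-algebra --- so there is nothing to compare against beyond noting that you have supplied the details the authors omitted. Your parenthetical alternative (restrictions of CP maps are OCP by \cite{BR1}, and OCP $\Rightarrow$ RCP as observed just before the lemma) is in fact the context the paper places around the statement, but the direct route you chose is shorter and self-contained.
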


\begin{lemma} \label{ikhuh} 
If $A$ is a $C^*$-algebra or operator system, then $x\in A_+$ if and only if $zx\in \re_A$ for all $z\in \fc$.   This is equivalent to: $zx\in \re_A$ for all $z\in \re_{\Cdb}$.
  \end{lemma}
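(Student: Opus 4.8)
The plan is to prove the two-way implication directly, then note the equivalence of the two conditions at the end. Recall $\fc = \frac{1}{2}\mathfrak{F}_{\Cdb}$ is the set of complex numbers $z$ with $|1-2z| \le 1$, i.e.\ the closed disk of radius $\frac{1}{2}$ centered at $\frac{1}{2}$; and $\re_{\Cdb} = \overline{\Rdb_+ \fc}$ is the closed right half-plane $\{z : \mathrm{Re}(z) \ge 0\}$ (this is just the operator-algebra fact $\re_A = \overline{\newc_A}$ applied to $A = \Cdb$). For the forward direction, suppose $x \in A_+$. For $z \in \re_{\Cdb}$ write $z = \lambda + i\mu$ with $\lambda \ge 0$, $\mu \in \Rdb$; then $zx + (zx)^* = z x + \bar z x = 2\lambda x \ge 0$ since $\lambda \ge 0$ and $x \ge 0$ (here I use that $x^* = x$ and that $x$ commutes with scalars). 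Hence $zx \in \re_A$. Since $\fc \subseteq \re_{\Cdb}$, the same holds for all $z \in \fc$.

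For the converse, suppose $zx \in \re_A$ for all $z \in \fc$. Taking $z = \frac{1}{2} \in \fc$ gives $x + x^* \ge 0$, and taking... actually it is cleaner to use $z = \frac{1}{2}(1 \pm i s)$ for small real $s$: since $|1 - 2z| = |{\mp} i s| = |s| \le 1$ for $|s| \le 1$, these lie in $\fc$. Then $zx + \bar z x^* = \frac{1}{2}(x + x^*) \pm \frac{i s}{2}(x - x^*) \ge 0$. Averaging the $+s$ and $-s$ versions gives $x + x^* \ge 0$; and since the expression is $\ge 0$ for both signs of $s$, the selfadjoint operator $\frac{\pm i s}{2}(x - x^*)$ is bounded below by $-\frac{1}{2}(x+x^*)$ for both signs, which forces $i(x - x^*) \ge -\frac{1}{s}(x+x^*)$ for all $0 < s \le 1$; letting $s \to 0$ would blow up unless $i(x-x^*)$ is handled more carefully. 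A cleaner route: from $zx \in \re_A$ for $z = \frac12(1+is)$ and also $z = \frac12(1-is)$, adding gives $x + x^* \ge 0$ and subtracting gives nothing directly; instead I would observe that the map $z \mapsto zx + \bar z x^*$ is $\Rdb$-linear in the pair $(\mathrm{Re}\,z, \mathrm{Im}\,z)$, and positivity on the disk $\fc$ — which has nonempty interior in $\Cdb \cong \Rdb^2$ — forces, after scaling, positivity of $x + x^*$ (from the real axis direction) and, crucially, that the ``imaginary part'' contribution $i(x^* - x)$ must vanish, because otherwise one could pick $z$ in the disk making $zx + \bar z x^*$ non-positive. Concretely, write $h = \frac12(x+x^*) \ge 0$ and $k = \frac{i}{2}(x^* - x)$ (selfadjoint); then for $z = \frac12(1 + is)$ we get $zx + \bar z x^* = h - sk \ge 0$ for all $s \in [-1,1]$, so $k \le h$ and $-k \le h$; replacing $x$ by its restriction/compression this does not yet give $k = 0$. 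To get $k=0$, use that $\frac12 i t x \in \fc$-related elements as well, or better: apply the hypothesis at every $z$ in the disk, so $\mathrm{Re}(z)\,h - \mathrm{Im}(z)\,k \ge 0$ whenever $(\mathrm{Re}\,z - \tfrac12)^2 + (\mathrm{Im}\,z)^2 \le \tfrac14$; choosing $\mathrm{Re}\,z = \tfrac12$, $\mathrm{Im}\,z = \pm\tfrac12$ gives $\tfrac12 h \mp \tfrac12 k \ge 0$, and choosing $z$ near the origin with $\mathrm{Re}\,z = \varepsilon$, $|\mathrm{Im}\,z| = \sqrt{\varepsilon - \varepsilon^2} \approx \sqrt\varepsilon$ gives $\varepsilon h \mp \sqrt{\varepsilon - \varepsilon^2}\,k \ge 0$, i.e.\ $\pm k \le \sqrt{\varepsilon/(1-\varepsilon)}\cdot\varepsilon^{-1/2}\cdots$ — letting $\varepsilon \to 0$ forces $k \le 0$ and $-k \le 0$, so $k = 0$, hence $x = x^*$, and then $x = h \ge 0$, i.e.\ $x \in A_+$.

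The equivalence of the two stated conditions is then immediate: ``$zx \in \re_A$ for all $z \in \re_{\Cdb}$'' trivially implies ``for all $z \in \fc$'' since $\fc \subseteq \re_{\Cdb}$, and the converse implication is exactly the forward direction proved above (once we know $x \in A_+$, it works against all $z \in \re_{\Cdb}$). I expect the only genuine subtlety to be the converse direction's extraction of $k = 0$ — i.e.\ showing that the numerical-range/disk condition is rigid enough to kill the skew-Hermitian part of $x$; the estimate near the boundary point $0$ of the disk (where the ratio of imaginary to real extent blows up like $\varepsilon^{-1/2}$) is what does it, and I would present that limiting argument carefully rather than the crude $z = \frac12(1 \pm is)$ version, which only bounds $k$ by $h$.
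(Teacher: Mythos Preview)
Your argument is essentially correct, though the exposition is cluttered with false starts and there is a slip in the final estimate: from $\varepsilon h \mp \sqrt{\varepsilon-\varepsilon^2}\,k \ge 0$ one gets $\pm k \le \dfrac{\varepsilon}{\sqrt{\varepsilon(1-\varepsilon)}}\,h = \sqrt{\dfrac{\varepsilon}{1-\varepsilon}}\,h$, not the expression with the extra $\varepsilon^{-1/2}$ you wrote; the correct bound indeed tends to $0$ as $\varepsilon\to 0^+$, so $k=0$ follows and the proof goes through.

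Your route, however, is genuinely different from the paper's. The paper does not work at the operator level at all: it fixes a vector $\zeta\in H$ and observes that $\mathrm{Re}(zx)\ge 0$ implies $\mathrm{Re}\bigl(z\,\langle x\zeta,\zeta\rangle\bigr)\ge 0$ for every $z\in\tfrac12\mathfrak{F}_{\Cdb}$, reducing everything to a single complex number $w=\langle x\zeta,\zeta\rangle$. It then appeals to the scalar fact (``by calculus'') that if $\mathrm{Re}(zw)\ge 0$ for all $z$ in that disk then $w\ge 0$, and concludes $\langle x\zeta,\zeta\rangle\ge 0$ for all $\zeta$. Your approach instead splits $x=h+ik$ with $h,k$ selfadjoint and runs the same limiting estimate directly on operators. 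What the paper's reduction buys is brevity (the scalar case is immediate); what your approach buys is that it never invokes a concrete Hilbert-space representation and makes the mechanism ($\mathrm{Im}\,z/\mathrm{Re}\,z$ unbounded on $\tfrac12\mathfrak{F}_{\Cdb}$ near $0$) completely explicit. In a clean write-up you should drop the abandoned attempts with $z=\tfrac12(1\pm is)$ and go straight to the boundary points $z=\varepsilon\pm i\sqrt{\varepsilon-\varepsilon^2}$.
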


\begin{proof}  We just prove the first `iff', from which the second equivalence is clear.

 ($\Rightarrow$) \ This is obvious.  

($\Leftarrow$) \ (C.f.\ \cite[Lemma 8.5]{BR1}.) \ If $zx\in \re_A$, then by definition, ${\rm Re}(zx)\geq 0$, and hence ${\rm Re}(z \langle x\zeta, \zeta \rangle) \geq 0$, for all $\zeta\in H$ and for all $z\in \fc$. By calculus this implies that
$\langle x\zeta, \zeta \rangle \geq 0$ for all $\zeta\in H$.
So $x\in A_+$.  
\end{proof}

\begin{theorem} \label{27}
If $T: A\to B$ is a linear map between $C^*$-algebras or operator systems then $T$ is completely positive iff $T$ is RCP.
\end{theorem}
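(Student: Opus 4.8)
The plan is to derive both implications from Lemma~\ref{ikhuh}, using that the hypotheses and conclusions of ``completely positive'' and of ``RCP'' both pass to matrix amplifications in a transparent way.

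For the direction ``completely positive $\Rightarrow$ RCP'' I would first record that a completely positive $T$ is in particular positive, hence $\ast$-linear: for self-adjoint $a$ with $-1\leq a\leq 1$ one has $1\pm a\geq 0$, so $T(1\pm a)\geq 0$ is self-adjoint and therefore $T(a)=\half\bigl(T(1+a)-T(1-a)\bigr)$ is self-adjoint (in the nonunital $C^*$-algebra case one instead uses that a self-adjoint element is a difference of positives). Then for $x\in A$ with $x+x^*\geq 0$ we get $T(x)+T(x)^*=T(x+x^*)\geq 0$ since $T$ is positive, and the identical argument with $T_n$ and $M_n(A)$ in place of $T$ and $A$ (both again a $C^*$-algebra, resp.\ operator system) gives $T_n(\re_{M_n(A)})\subseteq\re_{M_n(B)}$ for every $n$. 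Hence $T$ is RCP.

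For ``RCP $\Rightarrow$ completely positive'' the main step is to show $T$ is positive; the matrix levels then follow formally. Given $x\in A_+$, Lemma~\ref{ikhuh} gives $zx\in\re_A$ for every $z\in\fc$ (equivalently, every $z\in\re_{\Cdb}$); applying the $n=1$ instance of the RCP hypothesis yields $z\,T(x)=T(zx)\in\re_B$ for all such $z$, and feeding this into the converse implication of Lemma~\ref{ikhuh} (applied to the element $T(x)\in B$) gives $T(x)\in B_+$. To upgrade this to complete positivity I would note that $T_n\colon M_n(A)\to M_n(B)$ is itself RCP --- under the identification $M_m(M_n(A))\cong M_{mn}(A)$ the amplification $(T_n)_m$ becomes $T_{mn}$, so the conditions in Definition~\ref{rcdef} for $T_n$ are among those for $T$ --- and that $M_n(A)$ and $M_n(B)$ are again $C^*$-algebras (resp.\ operator systems), so the preceding paragraph applies to $T_n$ and shows it is positive, for every $n\in\Ndb$.

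I do not anticipate a serious obstacle: essentially all the content is already packaged in Lemma~\ref{ikhuh}. The only mildly delicate points are the automatic $\ast$-linearity and positivity of a CP map (routine, but worth stating explicitly in the operator-system case) and the observation that the RCP property descends to every amplification $T_n$, which is pure bookkeeping.
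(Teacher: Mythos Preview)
Your proposal is correct and follows essentially the same route as the paper: both directions hinge on Lemma~\ref{ikhuh}, with the RCP $\Rightarrow$ CP argument being identical to the paper's, and your CP $\Rightarrow$ RCP direction simply unpacks the paper's ``clearly'' by spelling out $\ast$-linearity. The only cosmetic difference is that the paper restricts to $x\in{\rm Ball}(A)_+$ before invoking Lemma~\ref{ikhuh}, but this is immaterial since positivity is homogeneous.
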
  \begin{proof}   This is similar to the proof in \cite{BR1}, but for convenience we give the details.
Clearly any completely positive map is RCP.  
Conversely, if $T : A \to B$ is RCP and  $x \in {\rm Ball}(A)_+$ and $z \in {\mathfrak F}_{\Cdb}$
then $zx  \in {\mathfrak r}_{A}$
by Lemma \ref{ikhuh}.  Thus $zT(x) = T(zx)  \in {\mathfrak r}_{B}$,
and so $T(x) \geq 0$ by Lemma \ref{ikhuh}.  A similar argument applies to matrices.
\end{proof}

\begin{theorem}  \label{db8}
If $T: A\to B(H)$ is a linear RCP map on a unital operator space $A$, then the canonical extension $\tilde{T}: A+A^*\to B(H): x+y^*\mapsto T(x)+T(y)^*$ is well defined and completely positive.
\end{theorem}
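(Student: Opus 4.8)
The plan is to first handle the $n=1$ case---that $\tilde T$ is well defined and positive---and then bootstrap to complete positivity. The linchpin is the observation that an RCP map on a unital operator space is automatically selfadjoint on selfadjoint elements: if $h = h^* \in A$, then $\pm ih + (\pm ih)^* = 0 \geq 0$, so $\pm ih \in \re_A$, and the RCP hypothesis (at the first matrix level) gives $T(\pm ih) \in \re_{B(H)}$, i.e.\ $\pm i\,(T(h) - T(h)^*) \geq 0$; having both signs forces $T(h) = T(h)^*$. Since $A$ is a unital operator space, any $w \in A$ with $w^* \in A$ decomposes as $\half(w + w^*) + i \cdot \frac{1}{2i}(w - w^*)$ with both summands selfadjoint and in $A$, so in fact $T(w^*) = T(w)^*$ for every such $w$. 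Well-definedness of $\tilde T$ is then immediate: if $x + y^* = x' + (y')^*$ in $A + A^*$ with $x,y,x',y' \in A$, then $w := x - x' = (y' - y)^*$ has $w, w^* \in A$, so $T(x) - T(x') = T(w) = (T(w^*))^* = (T(y') - T(y))^*$, that is, $T(x) + T(y)^* = T(x') + T(y')^*$.

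Next I would prove positivity of $\tilde T$. Let $c \in (A + A^*)_+$ and write $c = a + b^*$ with $a, b \in A$. Since $c = c^*$, the element $h := a - b$ satisfies $h = h^* \in A$ and $c = a + a^* - h$. Now set $a' := a - \half h \in A$; then $a' + (a')^* = a + a^* - h = c \geq 0$, so $a' \in \re_A$, and the RCP hypothesis gives $T(a') + T(a')^* \geq 0$. But $T(h) = T(h)^*$ by the previous paragraph, so by linearity $T(a') + T(a')^* = T(a) + T(a)^* - T(h) = T(a) + T(b)^* = \tilde T(c)$. Hence $\tilde T(c) \geq 0$.

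Finally, to upgrade to complete positivity: unravelling the definitions and using $M_n(A + A^*) = M_n(A) + M_n(A)^*$, one checks that the $n$th amplification $(\tilde T)_n$ coincides with $\widetilde{T_n}$, where $T_n \colon M_n(A) \to M_n(B(H))$ is the amplification of $T$. The RCP hypothesis says exactly that $T_n(\re_{M_n(A)}) \subseteq \re_{M_n(B(H))}$, and $M_n(A)$ is again a unital operator space, so applying the $n=1$ result to $T_n$ shows that $\widetilde{T_n} = (\tilde T)_n$ is positive for every $n \in \Ndb$; thus $\tilde T$ is completely positive.

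The step requiring the most care is the well-definedness of $\tilde T$, since $A \cap A^*$ may be large; everything hinges on the elementary but not-entirely-obvious fact (first paragraph) that an RCP map respects adjoints of selfadjoint elements---once that is established, the positivity estimate is just the one-line trick of replacing $a$ by $a - \half h$, and the passage to matrix levels is routine bookkeeping.
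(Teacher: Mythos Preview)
Your proof is correct and follows essentially the same route as the paper: well-definedness via selfadjointness of $T$ on $\Delta(A) = A \cap A^*$, positivity by rewriting a positive $c = a + b^*$ as $\tfrac{1}{2}(a+b) + \bigl(\tfrac{1}{2}(a+b)\bigr)^*$ (your $a' = a - \tfrac{1}{2}h$ is exactly $\tfrac{1}{2}(a+b)$), and then passing to matrix levels. The one small difference is that you establish selfadjointness on $\Delta(A)$ directly via the $\pm ih$ trick, whereas the paper quotes its earlier Theorem~\ref{27} (RCP on an operator system is completely positive, hence selfadjoint); your argument is a bit more self-contained but otherwise the two proofs coincide.
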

\begin{proof}
Let $T : A \to B(H)$ be RCP.   Then $T$ restricted to $\Delta(A)
= A \cap A^*$
is RCP, and so it is completely positive and selfadjoint by Theorem \ref{27}.  Define $\tilde{T}(a + b^*) = T(a) + 
T(b)^*$
for $a, b \in A$.  
To see that   $\tilde{T}$ is well defined, suppose $a + b^* = x+ y^*$,  for $a, b, x, y \in A$.  Then $a-x = (y-b)^* \in\Delta(A)$, and 
so $$T(a-x) = T((y-b)^*) = (T(y) - T(b))^* .$$  Thus, $T$ is well defined.
If $z = a + b^*$ is positive (usual sense), then $$z = z^* = b + a^* = \frac{1}{2} (a + 
b^* + b + a^*)
= \frac{1}{2} (a + b) + (\frac{1}{2} (a + b))^*,$$ and $\frac{1}{2} (a + b) \in A$.  Since $T$ is RCP,
we have
 $$\tilde{T}(z) = T(\frac{1}{2}(a + b)) + T(\frac{1}{2}(a + b))^* \geq 0.$$ 
  So $\tilde{T}$ is positive, and a   
similar argument at the matrix levels shows that $\tilde{T}$ is completely positive.
\end{proof}

\begin{theorem}[Extension and Stinespring Dilation for RCP Maps] 
\label{db9} If $T:A\to B(H)$ is a linear map on a unital operator space or on an approximately unital operator algebra,  and if $B$ is a $C^*$-algebra containing $A$, then $T$ is RCP iff $T$ has a completely positive extension $\tilde{T}:B\to B(H)$. This is equivalent to being able to write $T$ as the restriction to $A$ of  $V^*\pi(\cdot)V$ for a $*$-representation $\pi: B\to B(K)$, and an operator  $V: H\to K$. Moreover, this can be done with $\norm{T}=\norm{T}_{cb}=\norm{V}^2$, and this equals $\norm{T(1)}$ if $A$ is unital. 
\end{theorem}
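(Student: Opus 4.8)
The plan is to reduce the RCP case to the already-known OCP Stinespring theorem from \cite[Section 8]{BR1}, using Theorem \ref{db8} as the bridge. First I would treat the two halves of the first `iff' separately. If $T$ extends to a completely positive $\tilde{T} : B \to B(H)$, then $T$ is the restriction of an ordinary completely positive map on a $C^*$-algebra, hence RCP by the Lemma immediately preceding Theorem \ref{27} (restrictions of completely positive maps to subalgebras or unital subspaces are RCP). So the substance is the forward direction: assuming $T$ is RCP, produce the completely positive extension to $B$.

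For the forward direction, I would first handle the unital operator space case. Given $T : A \to B(H)$ RCP, Theorem \ref{db8} gives a well-defined completely positive map $\tilde{T} : A + A^* \to B(H)$ extending $T$. Now $A + A^*$ is an operator system sitting (completely order isomorphically) inside the $C^*$-algebra $B$, so Arveson's extension theorem extends $\tilde{T}$ to a completely positive map $B \to B(H)$, which is the desired extension of $T$. For the approximately unital operator algebra case, I would pass to the unitization: if $A$ is approximately unital then $A^1 = A + \Cdb 1$ is a unital operator algebra, hence a unital operator space, and I would need to check that $T$ extends to an RCP (indeed unital-ish) map on $A^1$ — this is where one uses that $A$ has a cai, so that $1_{A^{**}}$ is a norm-one identity, and $T^{**}(1)$ can be used to define $T$ on the added unit; then apply the unital-operator-space case to $A^1 \subseteq B^1$. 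Alternatively one can invoke the OCP extension theorem of \cite[Section 8]{BR1} directly once RCP is shown to coincide with OCP in the relevant setting, but the route through Theorem \ref{db8} plus Arveson is cleaner and self-contained.

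Once the completely positive extension $\tilde{T} : B \to B(H)$ exists, the Stinespring dilation $\tilde{T} = V^* \pi(\cdot) V$ with $\pi : B \to B(K)$ a $*$-representation and $V : H \to K$ is the classical Stinespring theorem, and restricting to $A$ gives the stated factorization of $T$; conversely any map of the form $V^* \pi(\cdot) V|_A$ is visibly the restriction of a completely positive map on $B$, closing the second `iff'. For the norm statement, classical Stinespring gives $\|\tilde{T}\|_{cb} = \|\tilde{T}\| = \|\tilde{T}(1_B)\| = \|V^* V\| = \|V\|^2$, and since $T$ is a restriction of $\tilde{T}$ we get $\|T\| \le \|T\|_{cb} \le \|V\|^2$; to get equality $\|T\| = \|T\|_{cb} = \|V\|^2$ one arranges the extension to be minimal or simply notes that when $A$ is unital $1_B \in A$ so $\|T(1)\| = \|\tilde{T}(1_B)\| = \|V\|^2$ forces all the inequalities to be equalities, giving the final clause $\|T\| = \|T\|_{cb} = \|V\|^2 = \|T(1)\|$.

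The main obstacle I anticipate is the approximately-unital (nonunital) operator algebra case: verifying that an RCP map $T : A \to B(H)$ genuinely extends to an RCP map on the unitization $A^1$, so that the unital machinery applies. This requires care with the Arens product and the identity $1_{A^{**}}$, defining $T(1)$ as an appropriate weak* limit along the cai (or as $T^{**}(1_{A^{**}})$) and checking that the extended map still sends $\re_{M_n(A^1)}$ into $\re_{M_n(B(H))}$; the key point is that $\re_{M_n(A^1)} \cap M_n(A)$ is weak* dense in a suitable sense in $\re_{M_n(A^1)}$, or more directly that $\overline{\re_{A^1} \cap A} = \re_A$ as noted in the introduction, combined with norm-continuity of the candidate extension. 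Everything else — the well-definedness in Theorem \ref{db8}, Arveson extension, and classical Stinespring — is standard and already available.
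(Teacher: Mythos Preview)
Your overall architecture---Theorem \ref{db8} to get a completely positive map on $A+A^*$, then Arveson extension, then classical Stinespring---is exactly the route the paper takes in the unital case (by ``following the lines of \cite[Theorem 8.9]{BR1}''), and your treatment of the backward direction and the norm identities in the unital case is correct.

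The difference is in the approximately unital case. The paper does \emph{not} pass to the unitization $A^1$; it passes to the second dual $A^{**}$, takes the canonical weak*-continuous extension $\tilde T : A^{**} \to B(H)$, and uses the fact from \cite[Section 3]{BR2} that $\overline{\mathfrak r_A}^{w*} = \mathfrak r_{A^{**}}$ (and similarly at matrix levels) to conclude immediately that $\tilde T$ is RCP on the unital algebra $A^{**}$. One then applies the unital machinery there. This sidesteps precisely the obstacle you flag. Your proposed resolutions of that obstacle do not quite work as stated: $\mathfrak r_{M_n(A^1)} \cap M_n(A)$ is not weak* dense in $\mathfrak r_{M_n(A^1)}$ in any useful sense (the identity $1 \in A^1$ is not approximable from $A$ inside $A^1$), and the norm-closure fact $\overline{\mathfrak r_{A^1} \cap A} = \mathfrak r_A$ only recovers $\mathfrak r_A$, not $\mathfrak r_{A^1}$, so it does not by itself verify RCP on the unitization. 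Your parenthetical mention of $T^{**}(1_{A^{**}})$ is pointing at the right object, but the clean way to exploit it is to work in $A^{**}$ throughout rather than in $A^1$; that is what the paper does, and it buys you the weak*-density statement for $\mathfrak r$ that makes the argument a one-liner.
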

\begin{proof}   The structure of this proof follows 
the argument in \cite[Theorem 8.9]{BR1}, but using 
the results we have established above, in particular Theorem \ref{db8}, in place of their ${\mathfrak F}_A$ variants
from \cite[Section 8]{BR1}.  Thus if 
$T: A\to B(H)$ is an RCP map on an approximately unital operator algebra, 
let $\tilde{T}: A^{**}\to B(H)$ be the canonical weak$^*$ continuous extension.
Since $\overline{{\mathfrak r}_A}^{w*} = {\mathfrak r}_{A^{**}}$ (a fact
first proved in
 \cite[Section 3]{BR2}), we have $$\tilde{T}({\mathfrak r}_{A^{**}}) 
= \tilde{T}(\overline{{\mathfrak r}_A}^{w*}) \subset \overline{T({\mathfrak r}_A)}^{w*}
\subset {\mathfrak r}_{B(H)}.$$
Similarly at the matrix levels, so that $\tilde{T}: A^{**}\to B(H)$ is RCP
on the unital operator algebra $A^{**}$.  We  now follow the lines of the proof of 
\cite[Theorem 8.9]{BR1}, but using 
the results established above.
\end{proof}

\noindent  {\em Remark.}  The last result is connected (see e.g.\ \cite{BRIII})
 to the theory of real states of operator algebras.

\begin{corollary} \label{db10} 
Let $T:A\to B(H)$ be a linear map on a unital operator space or a (not necessarily approximately unital)
operator algebra $A$.  Then $T$ is  OCP iff it is RCP. 
\end{corollary}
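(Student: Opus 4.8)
The plan is to dispose of the easy implication and then prove the converse, first in the case where $A$ is (or may be treated as) approximately unital, and then to bootstrap to a general operator algebra one accretive element at a time. The direction OCP $\Rightarrow$ RCP is already recorded in the discussion after Definition \ref{rcdef}, so only RCP $\Rightarrow$ OCP needs argument. Recalling that for $x\in M_n(A)$ the statement $x\in\mathfrak F_{M_n(A)}$ is just $x^*x\le x+x^*$ (and likewise $y\in C\,\mathfrak F_{M_n(B(H))}$ is $y^*y\le C(y+y^*)$), the whole converse reduces to producing one constant $C$ (I will get $C=\|T\|_{cb}$) with $T_n(x)^*T_n(x)\le C\,(T_n(x)+T_n(x)^*)$ for every $n$ and every $x\in M_n(A)$ with $x^*x\le x+x^*$.

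Suppose first that $A$ is a unital operator space or an approximately unital operator algebra, and let $T:A\to B(H)$ be RCP. By Theorem \ref{db9}, $T$ extends to a completely positive map $\Phi:B\to B(H)$ on a $C^*$-algebra $B\supseteq A$, with $\|\Phi(1)\|=\|T\|_{cb}$ (working, if need be, in a nondegenerate representation so that the unit relevant for $B$ is the unit of $A^1$). Fix $n$ and $x\in M_n(A)$ with $x^*x\le x+x^*$. Since $\Phi_n$ is $2$-positive, applying it to $\bigl(\begin{smallmatrix}1&x\\ x^*&x^*x\end{smallmatrix}\bigr)=\bigl(\begin{smallmatrix}1\\ x^*\end{smallmatrix}\bigr)\bigl(\begin{smallmatrix}1&x\end{smallmatrix}\bigr)\ge 0$ and taking a Schur complement gives the usual Kadison--Schwarz estimate, which combined with positivity of $\Phi_n$ yields
\[
T_n(x)^*T_n(x)=\Phi_n(x)^*\Phi_n(x)\le\|\Phi(1)\|\,\Phi_n(x^*x)\le\|T\|_{cb}\,\Phi_n(x+x^*)=\|T\|_{cb}\bigl(T_n(x)+T_n(x)^*\bigr).
\]
Hence $T$ is OCP with constant $\|T\|_{cb}$ in this case.

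Now let $A$ be an arbitrary (not necessarily approximately unital) operator algebra and $T:A\to B(H)$ RCP. Fix $n$ and $x\in M_n(A)$ with $x^*x\le x+x^*$; in particular $x$ is accretive, i.e.\ $x\in\mathfrak r_{M_n(A)}$. By the facts recalled in the Introduction (see \cite{BR1,BRIII}), the singly generated operator algebra $D=\mathrm{oa}(x)\subseteq M_n(A)$ is approximately unital --- indeed $(x^{1/k})_k$ is a cai for $D$, since $x^{1/k}x^{m}=x^{m+1/k}\to x^{m}$ for each $m\ge 1$ by continuity of $\alpha\mapsto x^\alpha$ on $(0,\infty)$ (and the net $(x^{1/k})$ is bounded). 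The restriction $S=T_n|_D$ is RCP (restrictions of RCP maps to subalgebras are RCP) with $\|S\|_{cb}\le\|T_n\|_{cb}=\|T\|_{cb}$, so the previous paragraph, applied to $S$ on the approximately unital algebra $D$ and to the element $x\in D$ (for which $x^*x\le x+x^*$), gives $T_n(x)^*T_n(x)=S(x)^*S(x)\le\|S\|_{cb}(S(x)+S(x)^*)\le\|T\|_{cb}(T_n(x)+T_n(x)^*)$. As $n$ and $x$ were arbitrary, $T$ is OCP, again with constant $\|T\|_{cb}$.

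The main obstacle is precisely the passage to the non-approximately-unital case: the extension/Stinespring machinery of Theorems \ref{db8}--\ref{db9} genuinely requires a (contractive approximate) identity, and one cannot run it on $T$ directly --- nor is there an obvious RCP extension of $T$ to the unitization $A^1$. What makes the argument go through is that accretivity of $x$ forces $\mathrm{oa}(x)$ to be approximately unital, so the machinery applies there, together with the fact that the constant it produces, $\|\Phi(1)\|=\|S\|_{cb}$, is bounded by $\|T\|_{cb}$ \emph{uniformly} in $x$ and in $n$ --- which is exactly the uniform-constant requirement in the definition of OCP. A minor point to watch is that the inequalities $x^*x\le x+x^*$ and $T_n(x)^*T_n(x)\le\|T\|_{cb}(T_n(x)+T_n(x)^*)$ are intrinsic operator inequalities, so no bookkeeping about which unitization is used is actually needed beyond the single normalization $\|\Phi(1)\|=\|T\|_{cb}$ supplied by Theorem \ref{db9}.
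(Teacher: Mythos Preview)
Your proof is correct, and in the nonunital case it takes a genuinely different route from the paper's.

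For the unital operator space / approximately unital case, the paper simply pairs Theorem~\ref{db9} (RCP $\Leftrightarrow$ restriction of CP) with \cite[Theorem~8.9]{BR1} (OCP $\Leftrightarrow$ restriction of CP). You instead use Theorem~\ref{db9} to get the CP extension $\Phi$ and then run Kadison--Schwarz directly to obtain the OCP constant $\|T\|_{cb}$ explicitly; this avoids quoting \cite[Theorem~8.9]{BR1} as a black box and has the mild bonus of identifying the constant.

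The real divergence is in the non-approximately-unital case. The paper passes once and for all to the largest approximately unital subalgebra $A_H$ of \cite[Section~4]{BR2}, using the structural identities $\mathfrak r_A=\mathfrak r_{A_H}$, $\mathfrak F_A=\mathfrak F_{A_H}$, and $M_n(A_H)=M_n(A)_H$ (the last from \cite{BRIII}) to reduce both the RCP and OCP conditions on $T$ to the corresponding conditions on $T|_{A_H}$. You instead localize element by element: for each $x\in\mathfrak F_{M_n(A)}$ you pass to the singly generated algebra $\mathrm{oa}(x)$, which is approximately unital because $(x^{1/k})$ is a cai, apply the first case to $T_n|_{\mathrm{oa}(x)}$, and observe that the resulting constant $\|T_n|_{\mathrm{oa}(x)}\|_{cb}\le\|T\|_{cb}$ is uniform in $x$ and $n$. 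Your approach is more elementary --- it needs neither the $A_H$ construction nor the matrix compatibility $M_n(A_H)=M_n(A)_H$ --- while the paper's approach is more structural and gives a cleaner global reduction (RCP/OCP on $A$ $\Leftrightarrow$ RCP/OCP on a single fixed approximately unital subalgebra).
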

\begin{proof}  If $A$ is a unital operator space or approximately unital  operator algebra then this
follows from  Theorem \ref{db9} and \cite[Theorem 8.9]{BR1}.  

If $A$ is a 
nonunital  operator algebra, let $A_H$ be the largest approximately unital subalgebra in $A$ as in \cite[Section 4]{BR2}, and let $S$
be the restriction of $T$ to $A_H$.
We have ${\mathfrak r}_A = {\mathfrak r}_{A_H}$ and ${\mathfrak F}_A = {\mathfrak F}_{A_H}$ by \cite[Section 4]{BR2}, so that
$T({\mathfrak r}_A) \subset {\mathfrak r}_{B(H)}$ iff 
$S({\mathfrak r}_{A_H}) \subset {\mathfrak r}_{B(H)}$.  Similarly for ${\mathfrak F}_A$ and ${\mathfrak F}_{A_H}$.
This is true at each matrix level too since $M_n(A_H) = M_n(A)_H$ by a lemma in \cite[Section 2]{BRIII}.  
Thus $T$ is RCP (resp.\ OCP) on $A$ iff 
$S$ is RCP (resp.\ OCP) on $A_H$.   This reduces the question to the case of approximately unital  operator algebras above.
\end{proof}

\section{The Support Projection} 

Note that if $x$ is an element of a subalgebra $A$ of $B(H)$ then there are two natural left support projections for $x$.  First there is
 the left support projection in $A^{**}$, namely the smallest 
projection $p$ in $A^{**}$ such that $px = x$ (assuming that there is such a $p$,
if there is not simply use the identity of the unitization).  Second, we have the left support projection
in $H$, the smallest projection $P$ in $B(H)$ such that  $Px = x$.   
Note that this is the projection onto $\overline{xH}$.
  If the left support projection in $A^{**}$, namely  $p$,
 is in the weak* closure 
of  $xAx$, and if $\pi : A^{**} \to B(H)$ is
 the natural weak*-continuous
homomorphism extending the inclusion map on $A$, then $\pi(p) = P$,
the left support projection
in $H$.    To see this, note that  $\pi(p) x = \pi(px) = \pi(x) = x$
in $B(H)$, so that $P \leq \pi(p)$.   If $x_t \to p$ weak* with $x_t \in xAx$, then $$P \pi(p) = \lim_t \, P x_t = 
\lim_t \, x_t = \pi(\lim_t \, x_t) = 
\pi(p),$$ so $\pi(p) \leq P$.  

Similarly  there are two natural right support projections, the second one being 
the projection onto $H \ominus {\rm Ker}(x)$.  If the left and right 
support projections of $x$ in $A^{**}$ coincide,  then we call this 
{\em the support projection of} $x$, written $s(x)$.  If this holds, and if $s(x) \in \overline{xAx}^{w*}$, then by the above we 
can also  conclude that the left and right 
support projections of $x$ in $B(H)$ coincide (and equal $\pi(s(x))$).  This will be the case for us below.

  The following is a generalization of \cite[Lemma 2.5]{BR1} (which
was the case when $x \in {\mathfrak F}_A$).  

\begin{proposition}   \label{supp}   For any operator algebra $A$,
if $x \in A$ with $x + x^* \geq 0$ and  $x \neq 0$,
then the left support projection of $x$ in $A^{**}$
 equals the right support projection, and equals $s(x (1+x)^{-1})$,
where the latter is the support projection
studied in {\rm \cite{BR1}}.   This also is the weak* limit of the net $(x^{\frac{1}{n}})$, and 
is an open projection in $A^{**}$
in the sense of  {\rm \cite{BHN}}.  If $A$ is a subalgebra of $B(H)$ then the left and right support projection of $x$ in $H$
are also equal.  
 \end{proposition}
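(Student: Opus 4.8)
The strategy is to reduce every assertion to the already-understood case of $\mathfrak{F}_A$ treated in \cite[Lemma 2.5]{BR1}, via the Cayley-type transform $y = x(1+x)^{-1}$. Since $x$ is accretive, $1+x$ is invertible with $(1+x)^{-1} \in A^1$ (by the facts on functions of accretive elements recalled in the Introduction, see also \cite[Lemma 1.1]{BRIII}) and $\|(1+x)^{-1}\| \leq 1$ (as ${\rm Re}(1+x) \geq 1$); since $1 - y = (1+x)^{-1}$, this gives $y \in \mathfrak{F}_A$, and $y \neq 0$ because $x \neq 0$. This transform is invertible: $x = y(1-y)^{-1}$ with $(1-y)^{-1} = 1+x$. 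Moreover $y \in {\rm oa}(x)$ and $x \in {\rm oa}(y)$ (again by the functional-calculus facts recalled in the Introduction), so ${\rm oa}(x) = {\rm oa}(y)$. Finally, since $x$ commutes with $(1+x)^{-1}$ and $A$ is a two-sided ideal in $A^1$, for each $a \in A$ we have $yay = x\,[(1+x)^{-1}a(1+x)^{-1}]\,x \in xAx$, whence $\overline{yAy}^{w*} \subseteq \overline{xAx}^{w*}$.

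Now apply \cite[Lemma 2.5]{BR1} to $y \in \mathfrak{F}_A$: the left and right support projections of $y$ in $A^{**}$ agree in a single open projection $s(y)$, which lies in $\overline{yAy}^{w*}$ and is the weak$^*$ limit of $(y^{1/n})$. Let $L$ and $R$ be the left and right support projections of $x$ in $A^{**}$. A short argument with smallest projections, using only that $x$ commutes with $(1+x)^{-1}$ in $A^1$, shows $L = R = s(y)$: for instance $Lx = x$ forces $Ly = (Lx)(1+x)^{-1} = y$, so $L \geq s(y)$, while $s(y)y = y$ forces $s(y)x = (s(y)y)(1-y)^{-1} = x$, so $s(y) \geq L$; the right-hand case is symmetric, grouping the products as $(1+x)^{-1}(xR)$ and $(1-y)^{-1}(ys(y))$ and using $y = (1+x)^{-1}x$ and $x = (1-y)^{-1}y$. (All these computations take place in $(A^1)^{**}$ with its unique Arens product, and are legitimate since the bidual elements appear only as outermost factors.) Thus the support projection $s(x) := L = R = s(y) = s(x(1+x)^{-1})$ is well defined; it is open; it lies in $\overline{yAy}^{w*} \subseteq \overline{xAx}^{w*}$; and the final assertion, equality of the left and right support projections of $x$ in $H$, then follows immediately from the discussion at the start of this section, since $s(x)$ is both the left and right support of $x$ in $A^{**}$ and lies in $\overline{xAx}^{w*}$, so its image under the canonical weak$^*$-continuous homomorphism $\pi : A^{**} \to B(H)$ is simultaneously the left and the right support projection of $x$ in $H$.

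It remains to show $s(x)$ is the weak$^*$ limit of $(x^{1/n})$. Since ${\rm oa}(x) = {\rm oa}(y)$ and $y \in \mathfrak{F}_A$, this algebra is approximately unital, with $(y^{1/n})$ a cai and $1_{{\rm oa}(x)^{**}} = s(y) = s(x)$. I claim $(x^{1/n})$ is a bounded approximate identity for ${\rm oa}(x)$, and hence, as for any bounded approximate identity (using Cohen factorization against the cai $(y^{1/n})$), converges weak$^*$ to $1_{{\rm oa}(x)^{**}} = s(x)$. The uniform bound $\sup_n \|x^{1/n}\| < \infty$ is a standard estimate for fractional powers of a sectorial operator (e.g.\ from Balakrishnan's formula $x^{1/n} = \tfrac{\sin(\pi/n)}{\pi}\int_0^\infty \lambda^{1/n - 1}(\lambda+x)^{-1}x\, d\lambda$ for $n \geq 2$ together with $\|(\lambda+x)^{-1}x\| \leq \min(2,\|x\|/\lambda)$; see also \cite{Haase}). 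Granting this bound, the approximate identity property follows by density from $x^{1/n}x^k = x^{k+1/n} \to x^k$ in norm as $n \to \infty$, and similarly on the right, since $\alpha \mapsto x^\alpha$ is continuous on $(0,\infty)$.

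The only genuinely delicate point is this last weak$^*$ convergence: the naive route through the holomorphic functional calculus breaks down precisely when $0 \in \sigma(x)$, where the branch $\lambda \mapsto \lambda^{1/n}$ is singular, so one is forced to identify the limit through the approximate-identity characterization, which in turn requires supplying the uniform norm bound on the $x^{1/n}$. Everything else is a routine transfer along $x \mapsto x(1+x)^{-1}$, the only care needed being the legitimacy of the Arens-product manipulations and the ideal property of $A$ in $A^1$.
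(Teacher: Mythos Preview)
Your proof is correct and follows essentially the same approach as the paper: reduce to the known $\mathfrak{F}_A$ case via the transform $y = x(1+x)^{-1}$, use the equivalence $px = x \Leftrightarrow py = y$ (and its right-sided analogue) to identify $s(x)$ with $s(y)$, and identify the weak$^*$ limit of $(x^{1/n})$ by showing this net is a bounded approximate identity for ${\rm oa}(x)$. The paper's proof is considerably terser---it simply says the argument ``follows the lines of the proof of \cite[Lemma 2.5]{BR1}'' and cites \cite[Section 3]{BR2} for the fact that $(x^{1/n})$ is a bai for ${\rm oa}(x)$, whereas you supply an explicit uniform bound via Balakrishnan's formula; but the underlying ideas are the same. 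One small remark: the appeal to Cohen factorization to deduce weak$^*$ convergence of a bai to $1_{{\rm oa}(x)^{**}}$ is unnecessary---any bounded approximate identity in an approximately unital operator algebra converges weak$^*$ to the identity of the bidual directly, since any weak$^*$ cluster point acts as a two-sided identity on $A$.
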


\begin{proof}  The first part follows  the lines of the proof of \cite[Lemma 2.5]{BR1}.   For example,
 if $x \in A$ with $x + x^* \geq 0$, then the operator algebra oa$(x)$ 
generated by $x$ was shown in \cite[Section 3]{BR2} to have $(x^{\frac{1}{n}})$
as a bounded approximate identity.  The weak* limit of the latter is the support projection of $x$ by the proof of \cite[Lemma 2.5]{BR1}.  
To see that  the support projection equals $s(x (1+x)^{-1})$, simply note that $px = x$ iff
$p x (1+x)^{-1} = x (1+x)^{-1}$.  The last assertion  follows  from the considerations above the Proposition.  
 \end{proof}

This result has many consequences that are spelled out in \cite{BR2, BRIII}.

\section{Some Properties of Roots in an Operator Algebra}

We need a simple fact about the `bidisk algebra functional calculus' $f \mapsto f(S,T)$,
where $f$ is in the  bidisk algebra $A(\Ddb^2)$, and $S, T$ are commuting contraction operators.
This calculus is essentially the  two-variable von Neumann inequality 
resulting from Ando's dilation theorem (see e.g.\ 2.4.13 in \cite{BLM}).  We need the relationship between the bidisk  functional calculus, and the 
`disk algebra functional calculus'  $h \mapsto h(T)$ coming from the usual von Neumann inequality
(written as $u_T$ in 2.4.12  in \cite{BLM}).   There are no doubt more sophisticated variants in the literature
(see e.g.\ \cite{LLL}), however for the readers convenience we give a short proof.  

\begin{lemma}   \label{vnfc}
If $f \in A(\Ddb^2)$, the bidisk algebra, and $g, h \in A(\Ddb)$, with $\Vert g \Vert_{A(\Ddb)} \leq 1$
and $\Vert h \Vert_{A(\Ddb)} \leq 1$,
and if $S, T \in B(H)$ are commuting contractive operators,
then $f(g(S),h(T)) = (f \circ (g, h))(S,T)$.
 \end{lemma}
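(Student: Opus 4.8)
The plan is to verify the identity on polynomials first, then pass to the limit. Both sides of the claimed equation, $f(g(S),h(T))$ and $(f\circ(g,h))(S,T)$, are defined via functional calculi: the left side uses the bidisk functional calculus applied to $f$ at the pair of commuting contractions $(g(S),h(T))$ (commuting because $S$ and $T$ commute, and contractive by the von Neumann inequality since $\|g\|_{A(\Ddb)}\le 1$ and $\|h\|_{A(\Ddb)}\le 1$), while the right side uses the bidisk functional calculus applied to the function $f\circ(g,h)$, which lies in $A(\Ddb^2)$ and has norm $\le 1$, again at the commuting contractions $(S,T)$. So first I would check that $g(S)$ and $h(T)$ are indeed commuting contractions and that $f\circ(g,h)$ is a well-defined element of $A(\Ddb^2)$ of norm at most one, so that the bidisk calculus is applicable in both places.

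Next, the core step: prove the identity when $f$ is a two-variable polynomial, say $f(z,w)=\sum_{j,k} c_{jk} z^j w^k$. In that case $f(g(S),h(T)) = \sum_{j,k} c_{jk}\, g(S)^j h(T)^k$. Since the single-variable functional calculus is a unital algebra homomorphism, $g(S)^j = (g^j)(S)$ and $h(T)^k = (h^k)(T)$, and moreover, because $S$ and $T$ commute and each single-variable calculus produces elements of the respective commutative algebras $\mathrm{oa}(S)$ and $\mathrm{oa}(T)$, the operators $(g^j)(S)$ and $(h^k)(T)$ commute. On the other side, $(f\circ(g,h))(z,w) = \sum_{j,k} c_{jk}\, g(z)^j h(w)^k$ as an element of $A(\Ddb^2)$, and applying the bidisk calculus — again a homomorphism on polynomials that sends $z\mapsto S$ and $w\mapsto T$ — gives $\sum_{j,k} c_{jk}\, g(S)^j h(T)^k$. (Here one should be slightly careful: $f\circ(g,h)$ need not be a polynomial even if $f$ is, but the homomorphism property and density arguments for the single-variable calculus let one reduce to the polynomial-in-$g,h$ case, or alternatively one approximates $g$ and $h$ by polynomials first.) The two expressions agree, establishing the identity for polynomial $f$.

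Finally I would upgrade from polynomials to arbitrary $f\in A(\Ddb^2)$ by a density-and-continuity argument. Polynomials are dense in $A(\Ddb^2)$ in the supremum norm, so pick polynomials $f_n\to f$ uniformly on $\overline{\Ddb}^2$. Then $f_n(g(S),h(T))\to f(g(S),h(T))$ in operator norm by the contractivity (boundedness) of the bidisk functional calculus at the commuting contractions $(g(S),h(T))$ — indeed $\|f_n(g(S),h(T)) - f(g(S),h(T))\| \le \|f_n - f\|_{A(\Ddb^2)}$ by Ando's inequality. On the other side, $f_n\circ(g,h)\to f\circ(g,h)$ uniformly (since $g,h$ map into $\overline{\Ddb}$, composition with $(g,h)$ is norm-nonincreasing on $A(\Ddb^2)$), hence $(f_n\circ(g,h))(S,T)\to (f\circ(g,h))(S,T)$ in operator norm, again by Ando's inequality. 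Passing to the limit in the polynomial identity $f_n(g(S),h(T)) = (f_n\circ(g,h))(S,T)$ gives the result.

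The main obstacle I anticipate is the careful handling of $f\circ(g,h)$ when neither $f$ nor $g,h$ are polynomials: one must confirm that composition is a well-defined, norm-nonincreasing, unital homomorphism $A(\Ddb^2)\to A(\Ddb)\cdot\text{(etc.)}$ respecting the relevant uniform closures, and that it is compatible with the way the single- and two-variable calculi are built from polynomial approximation. Once the polynomial case and these elementary continuity/density facts about the function algebras are in hand, the limiting argument is routine; the only real content is the bookkeeping that makes the two approximation schemes line up.
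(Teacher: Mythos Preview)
Your proposal is correct and follows essentially the same route as the paper: verify the identity on monomials $z^jw^k$ (using that the bidisk calculus sends $g(z)^j h(w)^k$ to $g(S)^j h(T)^k$, itself checked via polynomial approximation of $g,h$), then extend to all of $A(\Ddb^2)$ by density and the contractivity afforded by Ando's theorem. The paper packages the argument slightly more abstractly by naming the composition map $\rho_{g,h}:A(\Ddb^2)\to A(\Ddb^2)$ and observing that $\theta_{S,T}\circ\rho_{g,h}$ and $\theta_{g(S),h(T)}$ are two contractive homomorphisms agreeing on monomials, but the content is identical to what you wrote.
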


\begin{proof}  Let $\rho_{g,h} : A(\Ddb^2) \to A(\Ddb^2) : p \mapsto p(g(z),h(w))$.  Then $\rho_{g,h}$ is
a contractive homomorphism.  Let $\theta_{S,T} : A(\Ddb^2) \to  B(H) : p \mapsto p(S,T)$
be the bidisk algebra functional calculus, a contractive homomorphism, as is
$\theta_{g(S),h(T)}$.  Claim: 
$\theta_{S,T}  \circ \rho_{g,h} = \theta_{g(S),h(T)}$.  This is true since both sides are contractive homomorphism, and they 
agree on monomials $z^n w^m$.   Indeed 
$$\theta_{g(S),h(T)}(z^n w^m) = g(S)^n \; h(T)^m.$$
Viewing $A(\Ddb^2)$ as the closure of the tensor product of $A(\Ddb)$ with itself, we have $\theta_{S,T}(p(z) q(w)) = p(S) q(T)$ 
if $p, q$ are polynomials, and also if $p, q \in A(\Ddb)$ by approximating by polynomials.  Thus
$$\theta_{S,T}(\rho_{g,h}(z^n w^m) ) = \theta_{S,T}(g(z)^n h(w)^m) = g(S)^n h(T)^m .$$
From the Claim, if $f\in A(\Ddb^2)$, then we have $$f(g(S),h(T)) = \theta_{g(S),h(T)}(f) = 
\theta_{S,T}(\rho_{g,h}(f)) = \theta_{S,T}(f \circ (g,h)) = (f \circ (g, h))(S,T)$$
as desired.  
 \end{proof} 

\begin{lemma} \label{prodF}  \begin{itemize} \item [(1)]  We have $\{x^2 : x \in \fc \} = \{xy : x,y \in \fc \}$, and these coincide with the region $R$ inside the cardioid given by the polar equation $r = \frac{1}{2}\cos(\theta)+\frac{1}{2}$ for $\theta \in [-\pi,\pi].$ Hence if $x,y \in \fc $, then $x^\half y^\half  = (xy)^\half \in \fc$ also.
\item [(2)]  If
$b \in \frac{1}{2} {\mathfrak F}_{\Cdb}$, and if
$A$ is an 
operator algebra and $a \in \frac{1}{2} {\mathfrak F}_{A}$, then the numerical
range of $ab$ contains no strictly negative numbers, and the unique
accretive square root $(ab)^{\frac{1}{2}}$ is in $\frac{1}{2} {\mathfrak F}_{A}$.
\item [(3)]  
If  
$A$ is an 
operator algebra and $a, b \in \frac{1}{2} {\mathfrak F}_A$ with $ab = ba$ then $a^{\frac{1}{2}} b^{\frac{1}{2}}
 \in \frac{1}{2} {\mathfrak F}_A$. 
\item [(4)]
If
$A$ is an
operator algebra and $a, b \in {\mathfrak r}_A$ with $ab = ba$ then $a^{\frac{1}{2}} b^{\frac{1}{2}}
 \in {\mathfrak r}_A$.   \end{itemize}
    \end{lemma}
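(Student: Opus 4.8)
The plan is to prove the four parts roughly in the stated order, using each one to bootstrap the next, with the main new content being parts (1) and (2) from which (3) and (4) follow by functional calculus and approximation arguments.

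For part (1), I would first identify the set $\fc = \{z \in \Cdb : |1-2z| \le 1\}$ as the closed disk of radius $\tfrac12$ centered at $\tfrac12$. Parametrizing $z = \tfrac12(1 + e^{i\phi})$, a direct computation gives $z^2 = \tfrac12(1+e^{i\phi}) \cdot \tfrac12(1+e^{i\phi})$, and writing this in polar form should reproduce the cardioid $r = \tfrac12\cos\theta + \tfrac12$; so $\{z^2 : z \in \fc\}$ is exactly the filled cardioid region $R$. For the equality $\{z^2\} = \{zw : z, w \in \fc\}$, the inclusion $\subseteq$ is trivial; for $\supseteq$, given $z, w \in \fc$ I would write $z = \tfrac12(1+e^{i\alpha})$, $w = \tfrac12(1+e^{i\beta})$ and check that $|zw|$ and $\arg(zw)$ satisfy the cardioid bound — essentially because $|zw| = |z||w| \le \cos(\tfrac{\alpha}{2})\cos(\tfrac{\beta}{2})$ while the argument is $\tfrac{\alpha+\beta}{2}$, and the cardioid constraint $r \le \cos^2(\theta/2)$ (which equals $\tfrac12\cos\theta + \tfrac12$) is preserved by this product-of-cosines-vs-cosine-of-half-sum inequality. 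The final sentence of (1), that $x^{1/2}y^{1/2} = (xy)^{1/2} \in \fc$ for $x, y \in \fc$, follows since $R \subseteq \fc$ (one checks the cardioid lies inside the disk) and the principal square root of a point in $R$ lands back in $\fc$.

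For part (2), the key geometric input is again the cardioid: I would argue that for fixed $b \in \tfrac12\mathfrak{F}_{\Cdb}$, the map $a \mapsto ab$ should be analyzed via $\norm{1 - 2ab}$ or via numerical range estimates. Since $a \in \tfrac12\mathfrak{F}_A$ means $\norm{1-2a} \le 1$, hence $\sigma(a)$ and $W(a) \subseteq \fc$; and $W(ab) \subseteq \overline{\mathrm{co}}\{W(a) \cdot b\}$ — more carefully, since $b$ is a scalar, $W(ab) = b \cdot W(a) \subseteq b\,\fc \subseteq R$ by part (1) (taking $w = b$ and letting $z$ range over $\fc \supseteq W(a)$... actually $W(a) \subseteq \fc$ so $b W(a) \subseteq \fc \cdot \fc = R$). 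Since $R$ contains no strictly negative numbers (the cardioid touches $0$ only at $\theta = \pm\pi$ where $r = 0$), the principal square root is defined, and by the continuity of the root and the functional-calculus identity $(ab)^{1/2}$ has numerical range in the "half-angle" image of $R$, which I would check lies in $\fc$, giving $(ab)^{1/2} \in \tfrac12\mathfrak{F}_A$ — here I use that $(ab)^{1/2} \in \mathrm{oa}(ab) \subseteq A$.

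For part (3): given $a, b \in \tfrac12\mathfrak{F}_A$ with $ab = ba$, I would use the bidisk functional calculus. Set $S = 1 - 2a$, $T = 1-2b$, commuting contractions; then $a = g(S)$, $b = g(T)$ for $g(z) = \tfrac12(1-z) \in A(\Ddb)$ with $\norm g_{A(\Ddb)} = 1$. The function $f(z,w) = \big(\tfrac12(1-z)\big)^{1/2}\big(\tfrac12(1-w)\big)^{1/2}$ — using principal square roots, which are in $A(\Ddb)$ since $\tfrac12(1-z)$ maps $\Ddb$ into $\fc$ avoiding the negative axis — lies in $A(\Ddb^2)$, and by Lemma \ref{vnfc} we get $a^{1/2}b^{1/2} = f(S,T) = (f \circ (g,g))(S,T)$, an operator of the form $F(S,T)$ with $\norm F_{A(\Ddb^2)} \le \sup_{z,w \in \fc} |z^{1/2}w^{1/2}| \le \sup_{u \in R} |u^{1/2}|$, and then I need $\norm{1 - 2a^{1/2}b^{1/2}} = \norm{(1 - 2F)(S,T)} \le \sup_{\Ddb^2}|1 - 2F|$; since $F$ takes values in $\{z^{1/2}w^{1/2} : z,w \in \fc\} = \{u^{1/2} : u \in R\} \subseteq \fc$ by part (1), this sup is $\le 1$, giving $a^{1/2}b^{1/2} \in \tfrac12\mathfrak{F}_A$. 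For part (4): given $a, b \in \mathfrak{r}_A$ commuting, since $\mathfrak{r}_A = \overline{\newc_A}$ I would approximate — but commutativity is not preserved under arbitrary approximation, so instead I would pass to $a_\epsilon = (a + \epsilon)(1 + \epsilon a)^{-1}$ or use $\tfrac1n$-shifts within $\mathrm{oa}(a,b)$: set $a_n = \tfrac{a + 1/n}{1 + (a+1/n)/C_n}$-type elements, or more simply observe $a(1+ \delta a)^{-1} \in \newc_A \cap \mathrm{oa}(a)$ and these still commute with the analogous modifications of $b$. Scale so that these modified elements lie in $\tfrac12\mathfrak{F}_A$, apply part (3), then take limits as $\delta \to 0$ using continuity of roots (Lemma 1.1 of \cite{BRIII}) and closedness of $\mathfrak{r}_A$.

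The main obstacle I anticipate is part (3): one must be careful that the relevant square-root function genuinely lies in the bidisk algebra (holomorphic on the open bidisk, continuous up to the boundary) — this requires that $\tfrac12(1-z)$ stays away from the negative real axis for $z \in \overline{\Ddb}$, which holds only in the closed disk sense with the branch point at $z=1$ handled by continuity — and that the supremum bound $\sup_{u \in R}|1 - 2u^{1/2}| \le 1$ is correctly the statement "$R^{1/2} \subseteq \fc$", i.e., the second half of part (1). A secondary difficulty is the commutativity-preservation issue in part (4), where the approximants of $a$ and $b$ must be chosen as rational (or analytic) functions of $a$ and $b$ respectively so they automatically commute while converging appropriately and staying in (a scalar multiple of) $\mathfrak{F}_A$; the cleanest route is probably to work inside the unital commutative operator algebra $\mathrm{oa}(a,b)^1$ and invoke the already-established scalar cardioid geometry together with the functional calculus there.
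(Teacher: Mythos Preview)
Your arguments for parts (1), (3), and (4) are essentially correct and coincide with the paper's proof: the polar/cardioid computation, the bidisk functional calculus via Lemma~\ref{vnfc} applied to $1-2((1-z)/2)^{1/2}((1-w)/2)^{1/2}$, and the approximation by the commuting resolvent-type elements $a(1+\delta a)^{-1}$ (which is exactly the paper's $\frac{1}{t}a_t$). The obstacle you anticipate about the square root function lying in $A(\Ddb^2)$ is handled just as you suggest.

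There is, however, a genuine gap in your part (2). You argue that $W(ab)=b\,W(a)\subseteq R$, then conclude that $W((ab)^{1/2})$ lies in the ``half-angle image'' of $R$, hence in $\fc$, and from this that $(ab)^{1/2}\in\frac{1}{2}\mathfrak{F}_A$. The last implication fails: $W(y)\subseteq\fc$ is equivalent to the \emph{numerical radius} of $1-2y$ being at most $1$, which does not force $\Vert 1-2y\Vert\le 1$. (For instance, take $1-2y$ to be a norm-$2$ nilpotent of order two.) The intermediate claim that $W((ab)^{1/2})$ sits in $R^{1/2}$ is also unjustified, since there is no spectral-mapping theorem for numerical ranges.

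The fix is exactly what the paper does: either observe that $b\cdot 1\in\frac{1}{2}\mathfrak{F}_{A^1}$ commutes with $a$ and invoke part~(3) directly, or run the single-variable von Neumann inequality on the contraction $1-2a$ with the disk-algebra function $h(z)=1-2\bigl(b\,(1-z)/2\bigr)^{1/2}$, noting $\Vert h\Vert_\infty\le 1$ because $b\cdot\fc\subseteq R$ and $R^{1/2}\subseteq\fc$ by part~(1). Either route yields the required \emph{norm} bound $\Vert 1-2(ab)^{1/2}\Vert\le 1$ rather than merely a numerical-range inclusion. Since you already carry out the two-variable version of this argument correctly in part~(3), the repair is immediate once you reverse the order and do~(3) before~(2), as the paper does.
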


\begin{proof}  
(1) \    The boundary of $\fc$ is the circle given by the polar equation $r=\cos(\theta)$ for $\theta \in [-\frac{\pi}{2},\frac{\pi}{2}]$. So if $x \in \fc$, then $x=re^{i\theta}$ for some $\theta \in [-\frac{\pi}{2},\frac{\pi}{2}]$ and some $0 \leq r \leq \cos(\theta)$.  Thus $x^2=r^2e^{i2\theta}$, where $r^2 \leq \cos^2(\theta) = \half\cos(2\theta)+\half$. Hence $\{x^2 : x \in \fc \} \subseteq R$. For the other direction, if $se^{i\psi} \in R$, that is, $0 \leq s \leq \frac{1}{2}\cos(\psi)+\frac{1}{2}$ for some $\psi \in [-\pi,\pi]$, then $s^{\half} \leq ( \frac{\cos(\psi)+1}{2})^{\half}=\cos(\frac{\psi}{2})$, which shows that $s^{\half}e^{i\psi/2} \in \fc$. So we have shown $\{x^2 : x \in \fc \} = R$.

Since clearly $\{x^2 : x \in \fc \} \subseteq \{xy : x,y \in \fc \}$, it remains to show that $\{xy : x,y \in \fc \} \subseteq R$. To this end, suppose $x=re^{i\theta}$ and $y=se^{i\psi}$ for some $\theta, \psi \in [-\frac{\pi}{2},\frac{\pi}{2}]$, $0 \leq r \leq \cos(\theta)$, and $0 \leq s \leq \cos(\psi)$. We wish to show that $xy = rse^{i(\theta+\psi)} \in R$, or $0 \leq rs \leq \half \cos(\theta+\psi)+\half$. Since $rs \leq \cos(\theta)\cos(\psi)$, we will be done if $\cos(\theta)\cos(\psi) \leq \half\cos(\theta+\psi)+\half$. But this is clear since $\cos(\theta)\cos(\psi) =   \half\cos(\theta+\psi) + \half\cos(\theta-\psi)$.  

(3) \  Consider the function $f(z,w) = 1 - 2 (\frac{1-z}{2})^{\frac{1}{2}} (\frac{1-w}{2})^{\frac{1}{2}}$
on the bidisk $\bar{\Ddb} \times \bar{\Ddb}$.   By (1), $f$ takes values in $\bar{\Ddb}$, and it is
clearly a member of the bidisk algebra.    By the two-variable von Neumann inequality 
resulting from Ando's dilation theorem  (see e.g.\ 2.4.13 in \cite{BLM}), we have $\Vert f(1-2a,1-2b) \Vert \leq 1$.  We claim that 
$f(1-2a,1-2b) = 1 - 2 a^{\frac{1}{2}} b^{\frac{1}{2}}$.    To see this, let
$g(z) = h(z) =  ((1-z)/2)^{\frac{1}{2}}$, then $g(1-2a) = a^{\frac{1}{2}}$ and $h(1-2b) = b^{\frac{1}{2}}$ as in 
\cite[Proposition 2.3 ]{BR1}.
Letting $r(z,w) = 1 - 2 z w$, we have by Lemma \ref{vnfc}, with the $f$ there replaced by $r$,
that $1 - 2 a^{\frac{1}{2}} b^{\frac{1}{2}}
= f(1-2a,1-2b)$.  

(2) \  We may assume that $b \neq 0$.
For any state $\varphi$  of $A$ we have $\varphi(ab) = \varphi(a) b$,
and this is in the cardioid in (1), since $\varphi(a)$ and $b$ are in 
$\frac{1}{2} {\mathfrak F}_{\Cdb}$ (note that 
$|1 - 2 \varphi(a)| \leq \Vert 1 - 2 a \Vert$.  Thus the numerical
range of $ab$ contains no strictly negative numbers, and by
\cite{MP,LRS} a unique
accretive square root exists.    
The rest follows from (3), or may be proved directly using
a von Neumann inequality/disk algebra
functional calculus argument applied to the function 
$f(z) = 2 (b (z+1)/2)^{\frac{1}{2}} -1$ on the disk. 

(4) \ Suppose that $a, b \in {\mathfrak r}_A$ with $ab = ba$.
Then let $a_t = t a (1 + ta)^{-1}$ and $b_t = tb (1 + tb)^{-1}$ for $t > 0$.  These are in $\frac{1}{2}{\mathfrak F}_{A}$ as explained in
\cite[Section 3]{BR2}, and they commute, by algebra.  So $a_t^{\frac{1}{2}} b_t^{\frac{1}{2}}
\in \frac{1}{2}{\mathfrak F}_{A} \subset {\mathfrak r}_A$.  Therefore
    $$(\frac{1}{t} a_t)^{\frac{1}{2}} (\frac{1}{t} b_t)^{\frac{1}{2}}
= \frac{1}{t} a_t^{\frac{1}{2}} b_t^{\frac{1}{2}} \in {\mathfrak r}_A .$$
Taking the limit as $t \to 0$, and using the continuity of the roots stated in the introduction,
we see that  $a^{\frac{1}{2}} b^{\frac{1}{2}}
 \in {\mathfrak r}_A$.        \end{proof}

An application of the last result to  noncommutative Urysohn-type lemmas is given in \cite{BRIII}.

\medskip

\noindent {\em Remark.}   It is not true in general that $a^{\frac{1}{2}} b^{\frac{1}{2}} \in {\mathfrak r}_{A}$
for noncommuting $a, b \in \frac{1}{2}{\mathfrak F}_{A}$.   

\begin{corollary} \label{manr}  If  
$A$ is an 
operator algebra, $n \in \Ndb$, and $a_1, \cdots, a_n$ are in 
$\frac{1}{2} {\mathfrak F}_A$ (resp.\ in ${\mathfrak r}_{A}$) with $a_i a_j = a_j a_i$ for all
$i, j$, then $a_1^{\frac{1}{n}} \cdots \,  a_n^{\frac{1}{n}}$ is in 
$\frac{1}{2} {\mathfrak F}_A$ (resp.\ in ${\mathfrak r}_{A}$).  
 \end{corollary}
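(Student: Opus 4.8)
The plan is to deduce Corollary \ref{manr} from Lemma \ref{prodF} by an induction on $n$, proving the assertion about $\fa$ first and then obtaining the one about $\ra$ by the approximation argument of the proof of Lemma \ref{prodF}(4). One cannot simply imitate the bidisk computation of Lemma \ref{prodF}(3) with the $n$-fold product $(\frac{1-z_1}{2})^{1/n}\cdots(\frac{1-z_n}{2})^{1/n}$, since von Neumann's inequality fails for three or more commuting contractions; so each inductive step must be reduced to a genuinely two-variable estimate, handled by Ando's theorem together with Lemma \ref{vnfc}. It is convenient to prove the slightly more general statement $(\star_n)$: if $a_1,\dots,a_n\in\fa$ commute and $\beta_1,\dots,\beta_n>0$ with $\sum_i\beta_i\le 1$, then $a_1^{\beta_1}\cdots a_n^{\beta_n}\in\fa$. (The powers $a_i^{\beta_i}$ are defined since the $a_i$ are accretive, and they commute since $a_i^{\beta_i}\in{\rm oa}(a_i)$ and these subalgebras commute; Corollary \ref{manr} is the case $\beta_i=1/n$.)

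I will use two building blocks. \emph{(a)} If $a\in\fa$ and $\beta\in(0,1]$ then $a^{\beta}\in\fa$: indeed $a^{\beta}=g(1-2a)$ for $g(z)=(\frac{1-z}{2})^{\beta}\in A(\Ddb)$, as in \cite[Proposition 2.3]{BR1}, so $1-2a^{\beta}=\phi(1-2a)$ with $\phi=1-2g$; writing $\frac{1-z}{2}=re^{i\theta}$ with $0\le r\le\cos\theta$ and $|\theta|\le\frac{\pi}{2}$ for $|z|\le 1$, the inequality $r^{\beta}\le(\cos\theta)^{\beta}\le\cos(\beta\theta)$ (Jensen for the concave function $\log\cos$ with weights $\beta,1-\beta$ at $\theta,0$) shows $g(z)\in\fc$, hence $|\phi(z)|\le 1$, so $\Vert 1-2a^{\beta}\Vert=\Vert\phi(1-2a)\Vert\le 1$ by the von Neumann inequality for the contraction $1-2a$. \emph{(b)} The case $n=2$ of $(\star_n)$: if $c,d\in\fa$ commute and $\beta+\gamma\le 1$ then $c^{\beta}d^{\gamma}\in\fa$. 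This goes exactly as Lemma \ref{prodF}(3): the function $f(z,w)=1-2(\frac{1-z}{2})^{\beta}(\frac{1-w}{2})^{\gamma}$ lies in $A(\Ddb^2)$ and, by the two-point inequality $(\cos\theta)^{\beta}(\cos\psi)^{\gamma}\le\cos(\beta\theta+\gamma\psi)$ (Jensen for $\log\cos$ with weights $\beta,\gamma,1-\beta-\gamma$ at $\theta,\psi,0$) together with $|\beta\theta+\gamma\psi|\le\frac{\pi}{2}$, takes values in $\bar{\Ddb}$; hence $\Vert f(1-2c,1-2d)\Vert\le 1$ by Ando's inequality, and Lemma \ref{vnfc}, applied with $g(z)=(\frac{1-z}{2})^{\beta}$, $h(w)=(\frac{1-w}{2})^{\gamma}$ (each of $A(\Ddb)$-norm $1$) and with $r(z,w)=1-2zw$ in the role of its ``$f$'', identifies $f(1-2c,1-2d)=1-2c^{\beta}d^{\gamma}$.

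Now the induction. The case $n=1$ is (a). Let $n\ge 2$ and assume $(\star_{n-1})$. Given commuting $a_1,\dots,a_n\in\fa$ and $\beta_i>0$ with $\sum_i\beta_i\le 1$, put $\sigma=\beta_1+\cdots+\beta_{n-1}$, so $0<\sigma<1$ and $\sigma+\beta_n\le 1$, and set $f=a_1^{\beta_1/\sigma}\cdots a_{n-1}^{\beta_{n-1}/\sigma}$. Since each $\beta_i/\sigma\le 1$, each $a_i^{\beta_i/\sigma}\in\fa$ by (a), and since $\sum_{i<n}\beta_i/\sigma=1$ we get $f\in\fa$ by $(\star_{n-1})$. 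By the standard rules for fractional powers of commuting accretive operators — in particular $(x^{s})^{\alpha}=x^{s\alpha}$ and $(xy)^{\alpha}=x^{\alpha}y^{\alpha}$ for commuting accretive $x,y$ and $\alpha\in(0,1]$ — one has $f^{\sigma}=a_1^{\beta_1}\cdots a_{n-1}^{\beta_{n-1}}$, so $a_1^{\beta_1}\cdots a_n^{\beta_n}=f^{\sigma}a_n^{\beta_n}$ with $f,a_n\in\fa$ commuting; (b) now gives $a_1^{\beta_1}\cdots a_n^{\beta_n}\in\fa$. This proves $(\star_n)$, and in particular the $\fa$ half of Corollary \ref{manr}.

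For the $\ra$ half, suppose $a_1,\dots,a_n\in\ra$ commute. Put $a_{i,t}=ta_i(1+ta_i)^{-1}\in\fa$ for $t>0$; these commute, so $a_{1,t}^{1/n}\cdots a_{n,t}^{1/n}\in\fa$ by the $\fa$ case, and since $(\frac{1}{t}a_{i,t})^{1/n}=(\frac{1}{t})^{1/n}a_{i,t}^{1/n}$ we get $\prod_i(\frac{1}{t}a_{i,t})^{1/n}=\frac{1}{t}\,a_{1,t}^{1/n}\cdots a_{n,t}^{1/n}\in\Rdb_+\fa\subseteq\ra$. Letting $t\to 0^+$, $\frac{1}{t}a_{i,t}=a_i(1+ta_i)^{-1}\to a_i$, so by the continuity of roots on accretive operators (and of multiplication) the left-hand side tends to $a_1^{1/n}\cdots a_n^{1/n}$, which therefore lies in the closed cone $\ra$. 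The main point requiring care is the bookkeeping in the inductive step — rescaling the $\beta_i$ so as to sum to $1$, and recognizing $a_1^{\beta_1}\cdots a_{n-1}^{\beta_{n-1}}$ as a single power $f^{\sigma}$ of an element of $\fa$ — which rests on the laws of fractional powers for commuting accretive (sectorial) operators from the references cited in the introduction; the rest is a direct von Neumann/Ando computation together with a routine limit.
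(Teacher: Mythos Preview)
Your induction has a real gap at the step where you assert
\[
f^{\sigma}=a_1^{\beta_1}\cdots a_{n-1}^{\beta_{n-1}},\qquad f=a_1^{\beta_1/\sigma}\cdots a_{n-1}^{\beta_{n-1}/\sigma},
\]
justified by the ``standard rule'' $(xy)^{\alpha}=x^{\alpha}y^{\alpha}$ for commuting accretive $x,y$. That rule is \emph{not} among the laws for fractional powers recorded in the introduction, and the paper itself later warns (in the proof of Corollary~\ref{chne}) that such product identities for principal roots can fail. Concretely: by $(\star_{n-1})$ you do know that $g:=a_1^{\beta_1}\cdots a_{n-1}^{\beta_{n-1}}\in\fa$, and for rational $\sigma=p/q$ the elementary law $(a^{s})^{m}=a^{ms}$ gives $g^{q}=f^{p}$; but ``$g$ accretive and $g^{q}=f^{p}$'' does \emph{not} force $g=f^{p/q}$. (Take $f=1$, $\sigma=\tfrac14$: then $g=i$ is accretive with $g^{4}=1$, yet $g\neq 1^{1/4}$.) What is actually needed is $W(g)\subset S_{\sigma\pi/2}$, and that is precisely the content of Corollary~\ref{chne}(1) for $n-1$ operators --- which in the paper is deduced \emph{from} Corollary~\ref{manr}. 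So as written your argument is circular; to repair it you would have to import a product formula for fractional powers of commuting sectorial operators with angle sum below $\pi$ (as in \cite{Haase}) and check it matches the principal power used here, which is considerably more than a ``standard rule''.

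The paper sidesteps this entirely. It never needs $(xy)^{\alpha}=x^{\alpha}y^{\alpha}$: it first proves the case $n=2^{k}$ by induction on $k$, writing
\[
a_1^{1/2n}\cdots a_{2n}^{1/2n}=\bigl(a_1^{1/n}\cdots a_n^{1/n}\bigr)^{1/2}\bigl(a_{n+1}^{1/n}\cdots a_{2n}^{1/n}\bigr)^{1/2}
\]
and invoking only Lemma~\ref{prodF}(3) (two commuting elements of $\fa$). For general $n$ it then approximates: with $m=2^{k}$ large and $p=[m/n]$, repeat each $a_i$ roughly $p$ times among $m$ slots, apply the $2^{k}$ case, and let $m\to\infty$ using continuity of $t\mapsto a^{t}$. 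This uses only the two-variable Ando estimate at every stage, which is exactly the obstruction you correctly identified at the outset.
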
  \begin{proof}   We just do the ${\mathfrak F}_A$ case, the other being similar.
 We first prove this if $n = 2^k$ by induction
on the integer $k$.   Assuming this is true for $n = 2^k$, write 
$$a_1^{\frac{1}{2n}} \cdots \,  a_{2n}^{\frac{1}{2n}} = 
(a_1^{\frac{1}{n}} \cdots  \, a_{n}^{\frac{1}{n}})^{\frac{1}{2}} \, (a_{n+1}^{\frac{1}{n}} \cdots  a_{2n}^{\frac{1}{n}})^{\frac{1}{2}} .$$
By the inductive hypothesis $a_1^{\frac{1}{n}} \cdots  \, a_{n}^{\frac{1}{n}} \in  \frac{1}{2} {\mathfrak F}_A$
and $a_{n+1}^{\frac{1}{n}} \cdots \,  a_{2n}^{\frac{1}{n}} \in  \frac{1}{2} {\mathfrak F}_A$.
Hence by Lemma \ref{prodF} we have  $(a_1^{\frac{1}{n}} \cdots \,  a_{n}^{\frac{1}{n}})^{\frac{1}{2}} \, (a_{n+1}^{\frac{1}{n}} \cdots \,  a_{2n}^{\frac{1}{n}})^{\frac{1}{2}}$ in $\frac{1}{2} {\mathfrak F}_A$.
This completes the induction step.

To see that the result holds for every $n \in \Ndb$, we just do the case $n = 3$ as an illustration.  So suppose that 
$x, y, z \in  \frac{1}{2} {\mathfrak F}_A$.
For a  large integer $k$ set $m = 2^k, p = [m/3]$, and set $a_k = x$ for $1 \leq k \leq p$, and $a_k = y$ for $p+1 \leq k \leq 2 p$,
and $a_k = z$ for $2p + 1 \leq k \leq m$.   Then 
$$(x^{\frac{1}{m}})^p \,   (y^{\frac{1}{m}})^p \, (z^{\frac{1}{m}})^{m-2p} =
a_1^{\frac{1}{m}} \cdots  \, a_{m}^{\frac{1}{m}} \in  \frac{1}{2} {\mathfrak F}_A .$$
However $x^{\frac{p}{m}} \to x^{\frac{1}{3}}$ as $m \to \infty$ by continuity of powers in 
the exponent variable (mentioned in the introduction), and similarly 
 $y^{\frac{p}{m}} \to y^{\frac{1}{3}}$ and  $z^{\frac{m-2p}{m}} \to z^{\frac{1}{3}}$.
Since $\frac{1}{2} {\mathfrak F}_A$ is closed we deduce that 
$x^{\frac{1}{3}} y^{\frac{1}{3}} z^{\frac{1}{3}}  \in  \frac{1}{2} {\mathfrak F}_A$.
\end{proof}

Some ideas used in the  following three results came from discussions with Charles Batty (see Acknowledgements below).  

\begin{corollary} \label{ch}  If  
$A$ is an 
operator algebra, $n \in \Ndb$, and $a_1, \cdots, a_n$ are in 
$\frac{1}{2} {\mathfrak F}_A$ (resp.\  ${\mathfrak r}_{A}$),  with $a_i a_j = a_j a_i$ for all
$i, j$, 
 then $a_1^{s_1} \, a_2^{s_2} \cdots a_n^{s_n}$ is in 
$\frac{1}{2} {\mathfrak F}_A$ (resp.\ in ${\mathfrak r}_{A}$) for positive 
$s_1, \cdots, s_n$ with $\sum_{k=1}^n \, s_k \leq 1$.  
 \end{corollary}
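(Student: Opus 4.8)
The plan is to bootstrap from Corollary~\ref{manr}, which already disposes of the case of equal exponents $s_1 = \cdots = s_n = 1/n$. I would first handle rational exponents and then pass to the general statement by a continuity argument, giving the argument for $\frac{1}{2}\mathfrak{F}_A$ and remarking that the $\mathfrak{r}_A$ case is the same.

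\emph{Rational exponents.} Suppose $s_k = p_k/N$ with $p_k, N \in \Ndb$ and $\sum_k p_k \leq N$. The idea is to exhibit $a_1^{s_1} \cdots a_n^{s_n}$ as a product of $N$th roots of $N$ commuting elements of $\frac{1}{2}\mathfrak{F}$ and invoke Corollary~\ref{manr}. Form the list $b_1, \dots, b_N$ consisting of $p_k$ copies of $a_k$ for each $k$, padded out with $N - \sum_k p_k$ copies of the identity $1$ of $A^1$; note $1 \in \frac{1}{2}\mathfrak{F}_{A^1}$ since $\Vert 1 - 2\cdot 1\Vert = 1$, and this padding is the one point where a small new idea is needed, precisely because the total $\sum_k s_k$ may be strictly below $1$. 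These $N$ elements commute, so Corollary~\ref{manr} applied in the operator algebra $A^1$ gives $b_1^{1/N}\cdots b_N^{1/N} \in \frac{1}{2}\mathfrak{F}_{A^1}$. Since $1^{1/N} = 1$, and since (by the power laws for accretive operators recalled in the introduction) the product of the $p_k$ copies of $a_k^{1/N}$ equals $a_k^{p_k/N} = a_k^{s_k}$, this product is exactly $a_1^{s_1}\cdots a_n^{s_n}$; and as each $a_k^{s_k} \in \mathrm{oa}(a_k) \subseteq A$, it lies in $\frac{1}{2}\mathfrak{F}_{A^1} \cap A = \frac{1}{2}\mathfrak{F}_A$.

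\emph{General exponents.} Given positive reals $s_1, \dots, s_n$ with $\sum_k s_k \leq 1$, choose for each $m$ positive rationals $s_k^{(m)} < s_k$ with $s_k^{(m)} \to s_k$ (possible since each $s_k > 0$); then $\sum_k s_k^{(m)} < 1$, so the rational case gives $a_1^{s_1^{(m)}}\cdots a_n^{s_n^{(m)}} \in \frac{1}{2}\mathfrak{F}_A$. Letting $m \to \infty$ and using continuity of $\alpha \mapsto a_k^{\alpha}$ on $(0,\infty)$ for accretive $a_k$, together with norm-continuity of multiplication, the left side converges to $a_1^{s_1}\cdots a_n^{s_n}$, which therefore lies in the closed set $\frac{1}{2}\mathfrak{F}_A$.

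The $\mathfrak{r}_A$ case should go through with no change, using the $\mathfrak{r}_A$ clauses of Corollary~\ref{manr}, the facts $1 \in \mathfrak{r}_{A^1}$ and $\mathfrak{r}_{A^1}\cap A = \mathfrak{r}_A$, and the closedness of $\mathfrak{r}_A$; alternatively one may deduce it from the $\frac{1}{2}\mathfrak{F}_A$ case via the resolvent approximation $a_{k,t} = t a_k(1 + t a_k)^{-1} \in \frac{1}{2}\mathfrak{F}_A$ and the limit $t \to 0$, exactly as in the proof of Lemma~\ref{prodF}(4). I do not anticipate any real obstacle here: the substantive content sits in Lemma~\ref{prodF} and Corollary~\ref{manr}, and the only thing to be careful about is the padding-by-$1$ device that allows Corollary~\ref{manr}, whose exponents sum to exactly $1$, to cover totals that are strictly smaller than $1$.
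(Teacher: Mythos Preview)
Your proof is correct and follows essentially the same route as the paper: both pad the list of operators with copies of $1$ (working in $A^1$ if necessary) so as to invoke Corollary~\ref{manr} for rational exponents with common denominator, and then pass to arbitrary exponents by continuity of $\alpha \mapsto a_k^\alpha$. Your write-up is in fact more careful than the paper's sketch, explicitly noting that $a_k^{s_k} \in \mathrm{oa}(a_k) \subseteq A$ and that $\frac{1}{2}\mathfrak{F}_{A^1}\cap A = \frac{1}{2}\mathfrak{F}_A$.
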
  \begin{proof}   Again we just do the ${\mathfrak F}_A$ case, and  $n = 3$, the other cases being similar.
 So suppose that 
$x, y, z \in  \frac{1}{2} {\mathfrak F}_A$.   We may assume that $1 \in A$ by passing to $A^1$ if necessary.
The last result shows that $x^{\frac{m}{N}} \, y^{\frac{k}{N}} \, z^{\frac{p}{N}}  \cdot 1^{\frac{r}{N}}
\in \frac{1}{2} {\mathfrak F}_A$, 
for any positive integers $N, k, m, p, r$ with $k + m + p + r =  N$.
Taking limits, using  continuity of powers as in the last proof, gives the assertion. 
\end{proof}  

\begin{lemma} \label{numrange2}
If $x,y \in {\mathfrak r}_{A}$ with $xy=yx$, and with the numerical range  $W(x) \subseteq S_{\varphi}$ and $W(y) \subseteq S_{\psi}$,  for some $\varphi,\psi \in [0,\frac{\pi}{2}]$, then $W(x^{1/2}y^{1/2}) \subseteq S_{(\varphi+\psi)/2}$.
\end{lemma}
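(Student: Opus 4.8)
The plan is to rotate $x$ and $y$ by suitable unimodular scalars into $\mathfrak{r}_A$, apply Lemma~\ref{prodF}(4), and then translate back using a description of the sectors $S_\theta$ in terms of $\mathfrak{r}_A$. The key elementary observation is that for $z \in A$ and $\theta \in (0, \frac{\pi}{2}]$, one has $W(z) \subseteq S_\theta$ if and only if $e^{i(\pi/2 - \theta)} z \in \mathfrak{r}_A$ and $e^{-i(\pi/2-\theta)}z \in \mathfrak{r}_A$: the closed sector $S_\theta$ is exactly the intersection of the two closed half-planes $\{ w : \mathrm{Re}(e^{\pm i(\pi/2-\theta)}w) \geq 0 \}$ (whose bounding lines through the origin run along $e^{\pm i\theta}$, and for $\theta \leq \frac{\pi}{2}$ the two conditions cut out precisely $\{re^{i\alpha}:|\alpha|\le\theta\}$), while $\mathrm{Re}(e^{is}z) \geq 0$ says precisely that $e^{is}z + (e^{is}z)^* \geq 0$, i.e.\ $e^{is}z \in \mathfrak{r}_A$ (note $e^{is}z \in A$ whenever $z \in A$). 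For $\theta = \frac{\pi}{2}$ this reduces to $z \in \mathfrak{r}_A$. We may discard the trivial case $\varphi + \psi = 0$ (then $x, y \geq 0$ and $x^{1/2}y^{1/2} \geq 0$), so put $\theta := \frac{\varphi+\psi}{2} \in (0, \frac{\pi}{2}]$, $\mu := \frac{\pi}{2} - \varphi \in [0,\frac{\pi}{2}]$ and $\nu := \frac{\pi}{2} - \psi \in [0,\frac{\pi}{2}]$.

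By the observation, $e^{\pm i\mu} x \in \mathfrak{r}_A$ and $e^{\pm i \nu} y \in \mathfrak{r}_A$. The step requiring the most care is the rotation identity $(e^{i\mu}x)^{1/2} = e^{i\mu/2} x^{1/2}$ (and similarly with $\mu$ replaced by $-\mu$, and with $x, \mu$ replaced by $y, \nu$). First, $(e^{i\mu}x)^{1/2}$ makes sense: since $0 \leq \varphi \leq \frac{\pi}{2}$, the sector $e^{i\mu} S_\varphi \supseteq W(e^{i\mu}x)$ consists of points with argument in $[-\frac{\pi}{2}, \frac{\pi}{2}]$, hence avoids the strictly negative reals. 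The identity is then the complex-scalar analogue of the law $(cT)^\alpha = c^\alpha T^\alpha$ recalled in the Introduction; it holds because the principal $\frac{1}{2}$-power is the holomorphic functional calculus of the principal branch of $\lambda \mapsto \lambda^{1/2}$, and a rotation by $\mu$ with $|\mu| \leq \frac{\pi}{2}$ does not move a neighbourhood of $W(x) \subseteq S_\varphi$ across the branch cut $(-\infty, 0]$, so that $(e^{i\mu}\lambda)^{1/2} = e^{i\mu/2}\lambda^{1/2}$ holds identically there and hence with $\lambda$ replaced by $x$. (Alternatively one may invoke the standard fact from sectorial operator theory that $W(x^{1/2}) \subseteq S_{\varphi/2}$, which makes $e^{i\mu/2}x^{1/2}$ visibly the principal square root of $e^{i\mu}x$.)

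Now $e^{i\mu}x$ and $e^{i\nu}y$ are commuting elements of $\mathfrak{r}_A$, so Lemma~\ref{prodF}(4) gives $(e^{i\mu}x)^{1/2}(e^{i\nu}y)^{1/2} \in \mathfrak{r}_A$, that is, $e^{i(\mu+\nu)/2} x^{1/2}y^{1/2} \in \mathfrak{r}_A$; the identical argument applied to the commuting pair $e^{-i\mu}x, e^{-i\nu}y \in \mathfrak{r}_A$ gives $e^{-i(\mu+\nu)/2} x^{1/2}y^{1/2} \in \mathfrak{r}_A$. Since $\frac{\mu+\nu}{2} = \frac{\pi}{2} - \theta$, and since $x^{1/2} \in {\rm oa}(x) \subseteq A$ and $y^{1/2} \in {\rm oa}(y) \subseteq A$ give $x^{1/2}y^{1/2} \in A$, the observation from the first paragraph, applied this time with $z = x^{1/2}y^{1/2}$, yields $W(x^{1/2}y^{1/2}) \subseteq S_\theta = S_{(\varphi+\psi)/2}$, which is the claim. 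Everything beyond the rotation identity is sector bookkeeping together with the two invocations of Lemma~\ref{prodF}(4).
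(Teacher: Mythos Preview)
Your proof is correct and follows essentially the same approach as the paper: with $\mu,\nu$ playing the role of the paper's $\alpha,\beta$, both arguments rotate $x,y$ into $\mathfrak{r}_A$, apply Lemma~\ref{prodF}(4) to the two pairs $e^{\pm i\mu}x,\,e^{\pm i\nu}y$, and then intersect the resulting half-plane conditions to recover the sector $S_{(\varphi+\psi)/2}$. You have in fact been more careful than the paper at one point: the paper uses the rotation identity $(e^{i\alpha}x)^{1/2}=e^{i\alpha/2}x^{1/2}$ tacitly in this lemma and only justifies it later (in the proof of Corollary~\ref{chne}), whereas you address it explicitly---your alternative justification via $W(x^{1/2})\subseteq S_{\varphi/2}$ and the uniqueness of the accretive square root in $S_{\pi/4}$ is the cleaner of the two you offer, since the holomorphic-functional-calculus argument is slightly delicate when $0$ may lie in the spectrum.
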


\begin{proof}  Set $\alpha = \frac{\pi}{2}-\varphi$ and $\beta = \frac{\pi}{2}-\psi \geq 0$, so that $$W(e^{i\alpha} x),W(e^{-i\alpha} x),W(e^{i\beta} y),W(e^{-i\beta} y) \subseteq S_{\pi/2}.$$ Then by Corollary  \ref{ch} or Lemma \ref{prodF} (4), we have  $$W((e^{i\alpha}x)^{1/2}(e^{i\beta}y)^{1/2}) = e^{i(\alpha+\beta)/2} W(x^{1/2}y^{1/2}) \subseteq S_{\pi/2},$$ and similarly $$e^{-i(\alpha+\beta)/2} W(x^{1/2}y^{1/2}) \subseteq S_{\pi/2}.$$ The last two displayed equations together imply $$W(x^{1/2}y^{1/2}) \subseteq S_{(\pi-(\alpha+\beta))/2} = S_{(\varphi+\psi)/2}$$ as desired.
\end{proof}  

The following is a stronger version of the assertion for ${\mathfrak r}_{A}$ in Corollary  \ref{ch}.  

\begin{corollary} \label{chne}  
 Suppose that $A$ is an
operator algebra, $n \in {\mathbb N}$, and $a_1, \cdots, a_n$ are in
$A$,  with $a_i a_j = a_j a_i$ for all
$i, j$.  Suppose further that $W(a_k)$ contains no strictly negative numbers
for all $k$,
and that $s_1, \cdots, s_n$ are positive scalars with $\sum_{k=1}^n \, s_k \leq 1$.
\begin{itemize}
\item [(1)]  Suppose that $0 \leq \varphi_k \leq \pi$, and that  $W(a_k) \subseteq S_{\varphi_k}$, when $1 \leq k \leq n$.
If $\varphi = \sum_{k=1}^n \, s_k  \, \varphi_k$, then
$W(a_1^{s_1} \, a_2^{s_2} \cdots a_n^{s_n}) \subseteq S_{\varphi}$.
\item [(2)]  There exist angles with   $|\theta_k| \leq \frac{\pi}{2}$,
and $0 \leq \varphi_k \leq \frac{\pi}{2}$, and
$W(a_k) \subset e^{i \theta_k} S_{\varphi_k}$, for $1 \leq k \leq n$.
In this case we have
$W(a_1^{s_1} \cdots  a_n^{s_n}) \subset e^{i \theta} S_{\varphi}$, where
$\theta = \sum_{k=1}^n s_k \theta_k$, and $\varphi = \sum_{k=1}^n s_k \varphi_k$.
 \end{itemize}
\end{corollary}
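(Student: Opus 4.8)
The plan is to obtain (2) from (1) by a single rotation, and to prove (1) by running the dyadic/limiting scheme of Corollaries~\ref{manr} and~\ref{ch} on top of Lemma~\ref{numrange2}.

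\emph{Deducing (2) from (1).} Assume we are given angles $\theta_k,\varphi_k$ with $|\theta_k|\le\frac\pi2$, $0\le\varphi_k\le\frac\pi2$ and $W(a_k)\subseteq e^{i\theta_k}S_{\varphi_k}$ (such a decomposition exists, e.g., whenever $W(a_k)\subseteq S_\psi$ for some $\psi<\pi$, in which case $\theta_k=0$ works; in general (2) should be read as a statement about those $(a_k)$ admitting it). Put $b_k=e^{-i\theta_k}a_k$. Then the $b_k$ commute, $W(b_k)=e^{-i\theta_k}W(a_k)\subseteq S_{\varphi_k}$, and since $|\theta_k|+\varphi_k\le\pi$ none of these sets meets $(-\infty,0)$; moreover $(e^{-i\theta_k}a_k)^{s_k}=e^{-is_k\theta_k}a_k^{s_k}$, an instance of $(\lambda T)^{s}=\lambda^{s}T^{s}$ that is legitimate here because $|\theta_k|\le\frac\pi2$ and $W(a_k)\subseteq S_{|\theta_k|+\varphi_k}$ with $|\theta_k|+\varphi_k\le\pi$. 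Hence $b_1^{s_1}\cdots b_n^{s_n}=e^{-i\theta}\,a_1^{s_1}\cdots a_n^{s_n}$ with $\theta=\sum_k s_k\theta_k$. Applying (1) to the $b_k$ gives $W(b_1^{s_1}\cdots b_n^{s_n})\subseteq S_{\varphi}$ with $\varphi=\sum_k s_k\varphi_k$, and multiplying by $e^{i\theta}$ yields $W(a_1^{s_1}\cdots a_n^{s_n})\subseteq e^{i\theta}S_{\varphi}$.

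\emph{Proof of (1).} First reduce to a normalized dyadic situation as in the proof of Corollary~\ref{manr}: passing to $A^1$ if necessary we may assume $1\in A$, and adjoining a factor $a_{n+1}=1$ with $\varphi_{n+1}=0$, $s_{n+1}=1-\sum_{k\le n}s_k$, we may assume $\sum_k s_k=1$. For each $m$ choose nonnegative integers $p^{(m)}_k$ with $p^{(m)}_k/2^m\to s_k$ and $\sum_k p^{(m)}_k\le 2^m$, list each $a_k$ with multiplicity $p^{(m)}_k$, pad with copies of $1$ up to length $N=2^m$, and apply the special case: \emph{if $c_1,\dots,c_N$ commute, $N=2^m$, and $W(c_j)\subseteq S_{\omega_j}$ with $0\le\omega_j\le\pi$, then}
\[ W\!\left(c_1^{1/N}\cdots c_N^{1/N}\right)\subseteq S_{(\omega_1+\cdots+\omega_N)/N}. \]
Letting $m\to\infty$, using continuity of $t\mapsto a_k^{t}$ (from the introduction) and the trivial fact that $W(x_m)\subseteq S_{\varphi_m}$ with $x_m\to x$ and $\varphi_m\to\varphi$ forces $W(x)\subseteq S_{\varphi}$, recovers (1). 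The special case follows by induction on $m$: writing $c_1^{1/2N}\cdots c_{2N}^{1/2N}=P_1^{1/2}P_2^{1/2}$ with $P_1=c_1^{1/N}\cdots c_N^{1/N}$ and $P_2=c_{N+1}^{1/N}\cdots c_{2N}^{1/N}$, the inductive hypothesis gives commuting $P_1,P_2$ whose numerical ranges lie in sectors of half-angle at most $\pi$ (the two partial averages), and Lemma~\ref{numrange2} then puts $W(P_1^{1/2}P_2^{1/2})$ inside the sector of their average.

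\emph{The main obstacle.} Lemma~\ref{numrange2} is stated only for half-angles $\le\frac\pi2$, whereas the partial averages above may lie anywhere in $[0,\pi]$. If every $\varphi_k\le\frac\pi2$ then all partial averages stay $\le\frac\pi2$ and the argument above is complete as written; in particular (1), hence (2), is immediate from Lemma~\ref{numrange2} for operators whose numerical ranges lie in a half-plane. To reach the full range $\varphi_k\le\pi$ one must first extend Lemma~\ref{numrange2} to commuting $x,y$ with $W(x)\subseteq S_\varphi$, $W(y)\subseteq S_\psi$ and $0\le\varphi,\psi\le\pi$. Its proof (rotate $x,y$ into the right half-plane, then apply Lemma~\ref{prodF}(4)) breaks down, since $e^{i(\pi/2-\varphi)}x$ need not be accretive when $\varphi>\frac\pi2$; and simply passing to the accretive square roots $x^{1/2},y^{1/2}$ does not help \emph{formally}, because Lemma~\ref{prodF}(4) and Corollary~\ref{ch} always output products with halved exponents (one is led to $x^{1/4}y^{1/4}$, not $x^{1/2}y^{1/2}$), while squaring does not in general double a numerical-range sector bound. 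Establishing this $\le\pi$ version of Lemma~\ref{numrange2}---equivalently, controlling $W\!\big((x^{1/2^{j}}y^{1/2^{j}})^{2^{j}}\big)$ in the limit $j\to\infty$, the thin-sector factor $x^{1/2^{j}}y^{1/2^{j}}$ being pinned down by the $\le\frac\pi2$ case already proved---is the delicate point, and is where one must bring in techniques from the theory of (numerically) sectorial operators rather than the purely formal manipulations used above.
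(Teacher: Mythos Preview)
Your dyadic/limiting proof of (1) in the accretive case $\varphi_k\le\frac\pi2$, and your reduction of (2) to that case via the rotations $b_k=e^{-i\theta_k}a_k$, are exactly the paper's argument; since your deduction of (2) only invokes (1) for the $b_k$, which satisfy $\varphi_k\le\frac\pi2$, you have in fact already established (2) in full.

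The genuine gap is the direction of the last implication. You set things up as ``(1) $\Rightarrow$ (2), so it remains to prove (1) for all $\varphi_k\le\pi$'', and then correctly observe that Lemma~\ref{numrange2} does not extend past half-angle $\frac\pi2$. But the paper never extends Lemma~\ref{numrange2}. Having proved (2), it deduces the general case of (1) \emph{from} (2), calling this an ``easy exercise'': given $W(a_k)\subseteq S_{\varphi_k}$, one may assume each $\varphi_k$ is optimal, and then convexity together with avoidance of the negative real axis force the argument range of $W(a_k)$ to have width at most $\pi$ and to touch one of $\pm\varphi_k$; from this one reads off admissible $\theta_k,\psi_k$ with $|\theta_k|+\psi_k\le\varphi_k$, whence (2) gives $W(\prod a_k^{s_k})\subseteq e^{i\theta}S_\psi\subseteq S_{\sum s_k\varphi_k}$. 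So your ``main obstacle'' dissolves once you reverse the arrow; no sectorial-operator machinery is needed.

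One further point: your justification of $(e^{-i\theta_k}a_k)^{s_k}=e^{-is_k\theta_k}\,a_k^{s_k}$ is too quick. The introduction only records $(cT)^\alpha=c^\alpha T^\alpha$ for \emph{positive} scalars $c$, and the paper explicitly warns that such identities can fail for complex scalars (e.g.\ $(wz)^{1/2}\ne w^{1/2}z^{1/2}$ for $w,z$ in the third quadrant). The paper devotes a paragraph to this Claim: using the uniqueness assertion for principal $n$th roots in \cite{LRS}, one checks that both $(e^{-i\theta}a)^{1/n}$ and $e^{-i\theta/n}a^{1/n}$ have numerical range in $S_{\pi/n}$ and the same $n$th power $e^{-i\theta}a$, hence coincide; rational and then real exponents follow by taking powers and by continuity. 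You should supply this argument rather than declaring the identity ``legitimate''.
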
  \begin{proof}    First, we assume that $\varphi_k \leq \frac{\pi}{2}$.  We just sketch the proof of (1)
 in this case, since it follows the structure of the proof of Corollary \ref{ch} and the results 
leading up to that.   Item (1) in the case that $n = 2^N$ and $s_k = \frac{1}{2^N}$ is proved by induction on $N \geq 1$, similarly to the first paragraph
of the proof of Corollary \ref{manr}; the case $N = 1$ being Lemma \ref{numrange2}.    The case for general $n$ and  $s_k = \frac{1}{n}$ then follows similarly to the second paragraph
of that proof.    Finally the case of (1) for general positive $s_k$ with $\sum_{k=1}^n \, s_k \leq 1$ follows from the case in the 
last line by the idea in the proof of  Corollary  \ref{ch}.

Next, we state a general fact about an element $a \in A$ with numerical range 
avoiding the strictly negative real axis.  Here $W(a)$, being convex, must lie on a closed half-plane with $0$ in its boundary, and hence we can rotate clockwise by an angle  $\theta$ with $|\theta|  \leq  \frac{\pi}{2}$, to obtain $e^{-i \theta} a$ accretive.  Claim: 
$(e^{-i \theta} a)^s = e^{-i s \theta} \, a^s$
whenever $0 \leq s \leq 1$.  (Note that  this is not obvious; even if $A = \Cdb$ one  has to beware 
of simple sounding `identities' about roots, because of the issue of the `principal root'.  E.g.\ $(w z)^{\frac{1}{2}} \neq w^{\frac{1}{2}} z^{\frac{1}{2}}$ for numbers in the third quadrant.)
To see this, suppose that $\theta \geq 0$ (the negative case is similar).  Then $W(e^{i (\frac{\pi}{2} -\theta)} a)$ lies in the closed upper half plane.
By the first few lines of the proof of \cite[Theorem 2.8]{LRS}, we know that 
$$a^{\frac{1}{n}} = (e^{i (\frac{\pi}{2} -\theta)} a)^{\frac{1}{n}} \cdot e^{-i (\frac{\pi}{2} -\theta)/n} , \qquad n \in \Ndb $$
and that the numbers in $W((e^{i (\frac{\pi}{2} -\theta)} \, a)^{\frac{1}{n}})$ have argument in $[0,\frac{\pi}{n}]$.  
So the numbers in 
$W(a^{\frac{1}{n}}) =  e^{-i (\frac{\pi}{2} -\theta)/n} \, W((e^{i (\frac{\pi}{2} -\theta)} \, a)^{\frac{1}{n}})$ have argument in $$[-\frac{1}{n}(\frac{\pi}{2} -\theta),
\frac{1}{n}(\frac{\pi}{2} + \theta)] \subset [-\frac{\pi}{n}, \frac{\pi}{n}].$$
By the uniqueness 
assertion in \cite[Theorem 2.8]{LRS}, these $n$th roots are the usual (principal) ones.  
We also deduce that the numbers in 
$W(e^{-i \theta/n} \, a^{\frac{1}{n}})  = e^{-i \frac{\theta}{n}} \, W(a^{\frac{1}{n}})$ have argument in $[-\frac{\pi}{n},  \frac{\pi}{n}]$, and so 
by  the uniqueness 
assertion in \cite[Theorem 2.8]{LRS} again, $(e^{-i \theta} a)^{\frac{1}{n}} = e^{-i \frac{\theta}{n}} \, a^{\frac{1}{n}}$.
Raising to the power of a positive integer $m \leq n$, we obtain the Claim when $s$ is rational.  Hence it holds for 
all $s \in [0,1]$ by the continuity of $a^s$ in $s$, which is well known (particularly in the accretive case, which may be  `rotated' to give the 
general case).

Finally, let  $a_1, \cdots, a_n$ be as in the full statement of the corollary.  As in the last paragraph, we rotate by an
angle  $\theta_k$ with $|\theta_k| \leq  \frac{\pi}{2}$, to obtain $W(e^{-i \theta_k} a_k) \subset S_{\varphi_k}$, where $0 \leq \varphi_k \leq \frac{\pi}{2}$.   Applying the case proved in the first paragraph of the proof, if $s_1, \cdots, s_n$ are positive scalars with $\sum_{k=1}^n \, s_k \leq 1$, and if $\theta = \sum_{k=1}^n \, s_k \theta_k$
and $\varphi = \sum_{k=1}^n \, s_k  \, \varphi_k$, 
then  $W(e^{-i \theta} \, a_1^{s_1} \, a_2^{s_2} \cdots a_n^{s_n}) \subseteq S_{\sum_{k=1}^n \, s_k  \, \varphi_k}$.  (Here  we are using the Claim proved in the last 
paragraph).   
So $W(a_1^{s_1} \, a_2^{s_2} \cdots a_n^{s_n}) \subseteq e^{i \theta} \, S_{\varphi}$.  This proves (2),
from which (1) follows as an easy exercise.
\end{proof}  

\noindent {\em Remark.}  A similar proof works for mutually commuting  $a_1, \cdots, a_n$, such that for each $k$ there is some ray starting at the origin which 
avoids $W(a_k)$.

\begin{proposition} \label{incroo}   If $A$ is an operator algebra and $x \in \frac{1}{2} {\mathfrak F}_A$ then
$({\rm  Re}(x^{\frac{1}{n}}))$ is increasing.  \end{proposition}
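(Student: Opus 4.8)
The plan is to reduce the claim to a single inequality between the real parts of two successive roots, say $\operatorname{Re}(x^{1/n}) \leq \operatorname{Re}(x^{1/(n+1)})$, and then to handle that inequality by a functional-calculus/von Neumann inequality argument of the kind used in Lemma \ref{prodF}(3). First I would observe that since $x \in \frac{1}{2}\mathfrak{F}_A$, all powers $x^{\alpha}$ lie in $\operatorname{oa}(x)$ and are themselves in $\frac{1}{2}\mathfrak{F}_A$ (or at least accretive), so everything can be computed inside the disk algebra functional calculus for the single contraction $1-2x$. Writing $x = g(1-2x)$ with $g(z) = (1-z)/2$, we have $x^{\alpha} = g_\alpha(1-2x)$ where $g_\alpha(z) = ((1-z)/2)^{\alpha}$, as in \cite[Proposition 2.3]{BR1}. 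The quantity $\operatorname{Re}(x^{1/(n+1)}) - \operatorname{Re}(x^{1/n})$ is then $h(1-2x) + h(1-2x)^*$ where $h(z) = \frac{1}{2}\big(((1-z)/2)^{1/(n+1)} - ((1-z)/2)^{1/n}\big)$, so the statement to prove is exactly that $h$ maps $\overline{\mathbb{D}}$ into $\mathfrak{r}_{\mathbb{C}}$, i.e. $\operatorname{Re} h(z) \geq 0$ for $|z| \leq 1$; once that is known, the von Neumann inequality (contractive unital homomorphism $A(\mathbb{D}) \to \operatorname{oa}(x)^1$) pushes it to the operator $1-2x$, and hence gives $\operatorname{Re}(x^{1/n}) \leq \operatorname{Re}(x^{1/(n+1)})$ as elements of $A + A^*$.

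The scalar inequality is the heart of the matter, so I would spend the effort there. Substituting $w = (1-z)/2$, as $z$ runs over $\overline{\mathbb{D}}$ the point $w$ runs over the closed disk of radius $\frac{1}{2}$ centered at $\frac{1}{2}$, which is precisely $\frac{1}{2}\mathfrak{F}_{\mathbb{C}}$; in polar form $w = \rho e^{i\phi}$ with $|\phi| \leq \pi/2$ and $0 \leq \rho \leq \cos\phi$. The claim becomes $\operatorname{Re}\big(w^{1/(n+1)}\big) \geq \operatorname{Re}\big(w^{1/n}\big)$ for all such $w$, where the principal roots are used. Writing $w^{1/n} = \rho^{1/n} e^{i\phi/n}$, the inequality reads $\rho^{1/(n+1)}\cos(\phi/(n+1)) \geq \rho^{1/n}\cos(\phi/n)$. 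Since $0 \leq \rho \leq \cos\phi \leq 1$ we have $\rho^{1/(n+1)} \geq \rho^{1/n}$ (with the convention that at $\rho = 0$ both sides vanish), and $\cos(\phi/(n+1)) \geq \cos(\phi/n) \geq \cos(\pi/(2n)) > 0$ because $|\phi/n|$ decreases as $n$ increases and $|\phi|/n \leq \pi/(2n) < \pi/2$. Multiplying these two inequalities between nonnegative quantities gives the result; one should just be a little careful about the boundary point $\rho=0$ (where $w=0$, all roots are $0$, and the inequality is $0\ge 0$) and about staying within the principal branch, which is fine since $|\phi|\le \pi/2 < \pi$.

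Having established $\operatorname{Re}(x^{1/n}) \leq \operatorname{Re}(x^{1/(n+1)})$ for every $n$, monotonicity of the sequence $\big(\operatorname{Re}(x^{1/n})\big)_n$ is immediate. I expect the main obstacle to be purely bookkeeping rather than conceptual: making sure the functional calculus identities $x^{\alpha} = g_\alpha(1-2x)$ and the composition rules are invoked correctly (this is guaranteed by the facts assembled in the introduction and by \cite[Proposition 2.3]{BR1} and Lemma \ref{vnfc}), and being careful that all the roots appearing are the principal/accretive ones so that the single-variable von Neumann inequality for the contraction $1-2x$ genuinely applies to the function $h$. An alternative, perhaps cleaner, route avoiding any mention of branches is to work with the approximants $x_t = tx(1+tx)^{-1} \in \frac{1}{2}\mathfrak{F}_A$ and pass to the limit as in the proof of Lemma \ref{prodF}(4), but the direct disk-algebra argument above seems shortest.
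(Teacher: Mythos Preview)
Your proposal is correct and follows essentially the same route as the paper: reduce to the scalar case via the disk-algebra functional calculus for the contraction $1-2x$, verify the scalar inequality on $\frac{1}{2}\mathfrak{F}_{\Cdb}$ directly, and transfer back. The paper in fact proves the slightly stronger statement that ${\rm Re}(x^{s}) \geq {\rm Re}(x^{t})$ for all $0 < s < t \leq 1$ (your polar-coordinate computation works verbatim for this), and for the transfer step it invokes \cite[Chapter~IV, Proposition~3.1]{NF} rather than the bare von~Neumann inequality; you might tighten that one sentence, since the passage from ``$\operatorname{Re} h \geq 0$ on $\bar{\Ddb}$'' to ``$h(1-2x)$ accretive'' is not literally the contractive-homomorphism statement but a standard consequence of it.
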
  

  \begin{proof}   We will prove a little more.  Let $0 < s < t \leq 1$, and
write $$f(z) = ((1-z)/2)^{s} - ((1-z)/2)^{t} \; ,  \qquad z \in \Cdb, |z| \leq 1.$$
  This has positive real part by the easy  case  of the present 
result where $A = \Cdb$.
Then apply \cite[Proposition 3.1, Chapter IV]{NF} to deduce that 
$f(1-2x)$ is accretive.   Here $f(1-2x)$  is the `disk algebra functional calculus', arising from von Neumann's inequality for the contraction $1-2x$,
applied to $f$.  As in \cite[Proposition 2.3]{BR2} we have $f(1-2x) = x^{s} - x^t$.  
So Re$(x^{s} - x^t) \geq 0.$
\end{proof}

\noindent {\em Remarks.}  1) \ Proposition \ref{incroo} is false in general for norm $1$ elements of ${\mathfrak r}_A$.  A counterexample is
the normalization of the matrix with rows $1$ and $i$, and $i$ and $0$.  However it is shown in \cite{BRIII}
that for any $a \in {\mathfrak r}_A$ there is a positive constant $c$ with ${\rm  Re}((c a)^{\frac{1}{n}}))_{n \geq 2}$ increasing.

\smallskip

2)  \ 
Proposition \ref{incroo}  may be used to give the existence of `increasing' approximate  identities
in separable approximately unital operator algebras \cite{BRIII}.

\subsection*{Acknowledgments}
Sections 2 and 3 of the paper are from mid 2012, and originated  in discussions between the second and third authors.  The main results here were advertised in \cite{BR2} but without proofs.
Section 4 is from ongoing discussions between the first and second authors beginning mid 2013.
We thank Charles Batty for some helpful comments in answer to a question.
  In particular, Corollary  \ref{ch}  was pointed out to us by him in the case $n = 2$,
as was also the special case of the first assertion of
Corollary  \ref{chne} when $n = 1$ and $\varphi_k \leq \frac{\pi}{2}$ 
(of course here $k = 1$).  His proof of the latter contained one of the ingredients
we needed in Lemma \ref{numrange2}.

\end{document}